\thanks{Supported by the National
    Natural Science Foundation of China [11071213, 11371317] and Research
    Grants Council, Hong Kong SAR, China [781511M, 705413P].}} %
\newcommand\limm{\underset{n \rightarrow \infty}{\lim}}
\newcommand\limn{\underset{n \rightarrow \infty}{\lim}}
\newcommand\E{\mathbb{E}}
\newcommand\cov{\mathop{\text{Cov}}}
\newcommand\tr{\mathop{\text{tr}}}
\newcommand\diag{\mathop{\text{diag}}}
\newcommand\N{\mathcal{N}}
\begin{document}

\section{Introduction}
The aim of this paper is to derive the joint central limit theorem of a new type of random vector whose components are made with several groups of random sesquilinear forms.
To be more specific, we consider a sequence $\big\{(x_i, y_i)_{i \in \mathbb{N}}\big\}$ of iid. complex-valued, zero-mean random vector belonging to $\mathbb{C}^{K}\times \mathbb{C}^{K}$ ($K$ fixed) with a finite moment of  fourth-order. For positive integer $n\geq1$, write \begin{eqnarray}\label{1}
x_i=(x_{1i}, \cdots, x_{Ki})^T,~~~~ X(l)=(x_{l1}, \cdots, x_{ln})^T~ (1\leq l\leq K)~,
\end{eqnarray}
with a similar definition for the vectors $\{y_i\}$ and $\{Y(l)\}_{1\leq l\leq K}$. The covariance between  $x_{l1}$ and $y_{l1}$ is denoted as  $\rho(l)=E[\overline{x}_{l1}y_{l1}]$, $1\leq l\leq K$. Let $\big\{A_n=[a_{ij}(n)]\big\}_n$ and $\big\{B_n=[b_{ij}(n)]\big\}_n$ be two sequences of  $n \times n$ Hermitian matrices, and define
\begin{eqnarray}
&&U(l):=\frac{1}{\sqrt{n}}\big[X(l)^{*}A_n Y(l)-\rho(l)tr A_n\big]~, \label{u}\\
&&V(l):=\frac{1}{\sqrt{n}}\big[X(l)^{*}B_n Y(l)-\rho(l)tr B_n\big]~.\nonumber
\end{eqnarray}
We are studying  the  joint central limit theorem of the $2K$-dimensional complex-valued random vector:
\[
\big(U(1), \cdots, U(K), V(1), \cdots, V(K)\big)^{T}~.
\]

If we use only one sequence of Hermitian matrix, say $\{A_n\}$ and consider one form ($K=1$), then the problem reduces to the central limit theorem of a simple random  sesquilinear form:
\[
U(1):=\frac{1}{\sqrt{n}}\big[X(1)^{*}A_n Y(1)-\rho(1)tr A_n\big]~.
\]
If we further impose $Y\equiv X$, we obtain a classical random quadratic form
\[
U^{*}(1):=\frac{1}{\sqrt{n}}\big[X(1)^{*}A_n X(1)-\rho(1)tr A_n\big]
\]
with independent random variables.

There exists an extensive literature on the asymptotic distribution of quadratic form $U^{*}(1)$. The pioneering work in this area dates back to \cite{Sevastyanov}, who deals principally with the case when the variables $X$ have normal distribution.
This CLT is extended to arbitrary iid. components in $X$ by \cite{Whittle}, with additional conditions on the matrix $A$: in particular, $A$ has a zero diagonal (i.e quadratic form: $\tilde{U}(1):=\frac{1}{\sqrt{n}}X(1)^{*}A_n X(1)$). Later extensions deal with other types of limiting theorem (functional CLT, law of iterated logarithm) or dependent random variables in $X$, see: \cite{Rotar}, \cite{De}, \cite{fox}, \cite{mi} and \cite{Ja94} for reference.

In a different area, \cite{guang} and \cite{Hachem} established the asymptotic behavior of quadratic form and bilinear form, where $A=S_n$ is a sample covariance matrix and $A=(M_n-zI)^{-1}$ is the resolvent of some large dimensional random matrix $M_n$, respectively. Such CLT can be used in the areas of wireless communications and electrical engineering.

In the paper of \cite{BaiYao08}, the authors derived the central limit theorem for  $U(l)$  in \eqref{u} (i.e with one group of sesquilinear forms) in their Appendix as a tool for establishing the central limit theory for the extreme  sample eigenvalues when the population has a spiked covariance structure. 

In this paper, we follow the lines and strategy that was put forward in \cite{BaiYao08}, and extend this CLT to arbitrary number of groups of random sesquilinear forms, which is presented in Section \ref{main}. Indeed, this extension has been motivated by  applications  in the field of random matrix theory related to the spiked population model. When the population has a spiked covariance structure, we establish the asymptotic joint distribution of any two groups of extreme sample eigenvalues that correspond to the spikes. Besides, when the population covariance matrix is diagonal with $k$ (fixed) simple spikes, we find the joint distribution of the extreme sample eigenvalue and its corresponding sample eigenvector projection using our main result. All these applications are developed in Section \ref{application}.  Section \ref{proof} and the last Section contain  proofs and some additional technical  lemmas.
\section{Main result: central limit theorem for random sesquilinear forms}\label{main}

\begin{theorem}\label{sesquilinear}
Let $\big\{A_n=[a_{ij}(n)]\big\}_n$ and $\big\{B_n=[b_{ij}(n)]\big\}_n$ be two sequences of  $n \times n$ Hermitian matrices and the vector $\{X(l), Y(l)\}_{1 \leq l \leq K}$ be defined as in \eqref{1}. Assume that the following limits exist:
\begin{eqnarray*}
&&w_1=\limm\frac{1}{n}\tr [A_n\circ A_n],
  w_2=\limm \frac 1n\tr [B_n\circ B_n],
 w_3=\limm\frac{1}{n}\tr [A_n\circ B_n],~\label{w3}\\
&&\theta_1=\limm\frac{1}{n}\tr [A_n A_n^{*}],~~~
  \theta_2=\limm\frac{1}{n}\tr [B_n B_n^{*}],~~~
 \theta_3=\limm\frac{1}{n}\tr [A_n B_n^{*}],~\label{th3}\\
&&\tau_1=\limm\frac{1}{n}\tr [A_n^2],\quad\quad~
  \tau_2=\limm\frac{1}{n}\tr [B_n^2],\quad\quad~
 \tau_3=\limm\frac{1}{n}\tr [A_n B_n],~\label{t3}
\end{eqnarray*}
where $A\circ B$ denotes the Hadamard product of two matrices $A$ and $B$, i.e. $(A \circ B)_{ij}=A_{ij}\cdot B_{ij}$.
Define two groups of sesquilinear forms:
\begin{eqnarray*}
U(l)=\frac{1}{\sqrt{n}}\big[X(l)^{*}A_n Y(l)-\rho(l)tr A_n\big]~,
V(l)=\frac{1}{\sqrt{n}}\big[X(l)^{*}B_n Y(l)-\rho(l)tr B_n\big]~.
\end{eqnarray*}
Then, the $2K$-dimensional complex-valued random vector:
\[
\big(U(1), \cdots, U(K), V(1), \cdots, V(K)\big)^{T}
\]
converges weakly to a zero-mean complex-valued vector $W$ whose real and imaginary parts are Gaussian. Moreover, the Laplace transform of $W$ is given by
\[
\E\exp~\Bigg(\begin{pmatrix}
      c \\
      d \\
    \end{pmatrix}^TW
\Bigg)=\exp~\Bigg[\frac12\begin{pmatrix}
               c \\
               d \\
             \end{pmatrix}^TB\begin{pmatrix}
                               c & d\\
                             \end{pmatrix}
\Bigg]~,~~~~c,d \in \mathbb{C}^K~,
\]
with
\begin{equation*}
B=\left(
\begin{array}{cc}
 B_{11} & B_{12}\\
 B_{12} & B_{22}\\
 \end{array}
\right)_{2K \times 2K}.
\end{equation*}
Each block within $B$ is a $K \times K$ matrix, having the structure ($l, l^{'}=1, \cdots, K$):
\begin{eqnarray*}
&&B_{11}(l,l^{'})=\cov ~(U(l), U(l^{'}))=w_1A_1+(\tau_1-w_1)A_2+(\theta_1-w_1)A_3~,\\
&&B_{22}(l,l^{'})=\cov~ (V(l), V(l^{'}))=w_2A_1+(\tau_2-w_2)A_2+(\theta_2-w_2)A_3~,\\
&&B_{12}(l,l^{'})=\cov~ (U(l), V(l^{'}))=w_3A_1+(\tau_3-w_3)A_2+(\theta_3-w_3)A_3~,
\end{eqnarray*}
where $A_1$, $A_2$ and $A_3$ are given by
\begin{eqnarray}
&&A_1=\E(\overline{x}_{l1}y_{l1}\overline{x}_{l^{'}1}y_{l^{'}1})-\rho(l)\rho(l^{'})~,\\
&&A_2=\E(\overline{x}_{l1}\overline{x}_{l^{'}1})E(y_{l1}y_{l^{'}1})~,\\
&&A_3=\E(\overline{x}_{l1}y_{l^{'}1})E(\overline{x}_{l^{'}1}y_{l1})~.\label{aaa}
\end{eqnarray}
\end{theorem}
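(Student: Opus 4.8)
The plan is to prove the joint CLT via the Cramér–Wold device combined with a martingale central limit theorem, following the one-group argument of \cite{BaiYao08} but carried out at the level of the full $2K$-dimensional vector. First I would reduce to a single scalar: fix vectors $c=(c_1,\dots,c_K)$, $d=(d_1,\dots,d_K)$ in $\mathbb{C}^K$ and consider the linear combination
\[
S_n:=\sum_{l=1}^K \bigl(c_l\,U(l)+d_l\,V(l)\bigr)
  =\frac{1}{\sqrt n}\sum_{l=1}^K\Bigl[c_l\bigl(X(l)^*A_nY(l)-\rho(l)\tr A_n\bigr)+d_l\bigl(X(l)^*B_nY(l)-\rho(l)\tr B_n\bigr)\Bigr].
\]
It suffices to show that $S_n$ converges to a centred (complex) Gaussian whose variance matches the quadratic form $\binom{c}{d}^T B\binom{c}{d}$ read off from the stated block matrix; the Laplace-transform statement then follows, and so does the joint convergence. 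Writing $C_n(l):=c_lA_n+d_lB_n$, each summand is $X(l)^*C_n(l)Y(l)-\rho(l)\tr C_n(l)$, so $S_n$ is a sum over $l$ of centred sesquilinear forms in independent coordinates, with a common structure.

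Next I would set up the martingale decomposition. Order the index pairs and introduce the filtration $\mathcal F_k=\sigma\{(x_i,y_i):i\le k\}$; let $\E_k$ denote conditional expectation given $\mathcal F_k$. Because the summands $X(l)^*C_n(l)Y(l)$ mix coordinates $i<j$ and the diagonal terms $i=j$, I would split $\sqrt n\,S_n$ into (a) a "diagonal'' part $\sum_i\sum_l\bigl(\overline{x}_{li}c_{ii}(l)y_{li}-\rho(l)c_{ii}(l)\bigr)$, where $c_{ii}(l)$ are the diagonal entries of $C_n(l)$, and (b) an "off-diagonal'' part involving $i\ne j$. Writing the whole thing as $\sum_{k=1}^n\gamma_k$ with $\gamma_k:=(\E_k-\E_{k-1})(\sqrt n\,S_n)$, one checks $\{\gamma_k\}$ is a martingale difference array; concretely $\gamma_k$ collects the $i=k$ diagonal term plus the cross terms pairing index $k$ with indices $<k$. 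Then I would invoke the martingale CLT (e.g. the Lindeberg–Lyapunov version): it remains to verify (i) the Lyapunov/Lindeberg negligibility of the $\gamma_k$, which follows from the fourth-moment hypothesis and the boundedness in $n$ of the normalized traces $\frac1n\tr[A_nA_n^*]$ etc. (so that $\sum_k\E|\gamma_k|^4\to0$ or a truncation argument applies), and (ii) convergence of the sum of conditional variances $\sum_k\E_{k-1}|\gamma_k|^2$ (and the conjugate-bilinear analogue $\sum_k\E_{k-1}\gamma_k^2$) to the deterministic limits predicted by $w_1,w_2,w_3,\theta_1,\theta_2,\theta_3,\tau_1,\tau_2,\tau_3$.

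The heart of the matter, and the step I expect to be the main obstacle, is the bookkeeping in (ii): expanding $\sum_k\E_{k-1}|\gamma_k|^2$ produces, for each pair $(l,l')$, sums such as $\sum_{i\ne j}|c_{ij}(l)|^2$, $\sum_{i\ne j}c_{ij}(l)\overline{c_{ji}(l')}$, $\sum_{i\ne j}c_{ij}(l)c_{ji}(l')$, and $\sum_i c_{ii}(l)\overline{c_{ii}(l')}$, weighted by the four fourth-order mixed moments of $(x_{li},y_{li},x_{l'i},y_{l'i})$. Using Hermitian symmetry of $A_n,B_n$ one identifies $\sum_{ij}|c_{ij}(l)|^2=\tr[C_n(l)C_n(l)^*]$, $\sum_{ij}c_{ij}(l)c_{ji}(l')=\tr[C_n(l)C_n(l')]$, and $\sum_i c_{ii}(l)\overline{c_{ii}(l')}=\tr[C_n(l)\circ \overline{C_n(l')}]$; subtracting the diagonal from the full double sum is exactly what turns $\theta$'s and $\tau$'s into $(\theta-w)$ and $(\tau-w)$, while the pure-diagonal contribution carries the $w$'s. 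Expanding $C_n(l)=c_lA_n+d_lB_n$ bilinearly in $(c,d)$ then distributes these traces across the nine limits and, after matching the coefficient of each monomial $c_lc_{l'},\,c_ld_{l'},\,d_ld_{l'}$, reproduces precisely the blocks $B_{11},B_{12},B_{22}$ with the structure constants $A_1,A_2,A_3$ coming from the three distinct moment patterns above. I would also need a preliminary truncation/centralization of the entries $x_{li},y_{li}$ (standard, using the fourth moment) to make the Lyapunov bounds rigorous, and a short argument that replacing $A_n,B_n$ by their Hermitian parts is harmless. The remaining computations are routine once this combinatorial dictionary between double sums, traces, and the nine limiting constants is set up.
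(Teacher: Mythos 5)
Your proposal is correct in outline but follows a genuinely different route from the paper. The paper reduces to the same linear combination $\eta_n=\sum_l\big(c_lU(l)+d_lV(l)\big)$, but then applies the moment method of Bai and Yao: it expands $\E\,\eta_n^{K}$ as a sum over directed graphs, argues that only nine types of two-edge components contribute, computes the combinatorial multiplicities (yielding the factor $(2p-1)!!$ and hence Gaussianity), and identifies the limit of every moment. You instead propose a martingale-difference decomposition with respect to $\mathcal F_k=\sigma\{(x_i,y_i):i\le k\}$ together with a Lindeberg-type martingale CLT. Your dictionary between the conditional-variance sums and the nine trace limits --- diagonal sums giving the $w$'s, full double sums giving the $\theta$'s and $\tau$'s, so that subtracting the diagonal produces the $(\theta-w)$ and $(\tau-w)$ coefficients, with the three distinct fourth-order moment patterns producing $A_1,A_2,A_3$ --- is exactly the right bookkeeping and would reproduce the stated blocks after expanding $C_n(l)=c_lA_n+d_lB_n$ bilinearly. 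What the martingale route buys is that it works directly with fourth moments instead of controlling mixed moments of every order and showing all non-major graph components are negligible; what it costs is a step your sketch leaves implicit: the quantities $\sum_k\E_{k-1}|\gamma_k|^2$ and $\sum_k\E_{k-1}\gamma_k^2$ are random (they depend on $(x_i,y_i)$ for $i<k$), so beyond identifying their limits you must prove convergence in probability, which requires a separate variance estimate on these sums and, as in the paper's own argument inherited from Bai--Yao, some uniform control on the matrices (e.g.\ bounded spectral norm) beyond the bare existence of the normalized trace limits. Two small repairs: the martingale array should decompose $S_n$ itself rather than $\sqrt n\,S_n$ (your definition $\gamma_k=(\E_k-\E_{k-1})(\sqrt n\,S_n)$ undoes the normalization already built into $S_n$), and the remark about passing to Hermitian parts is vacuous since $A_n,B_n$ are assumed Hermitian.
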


\begin{proof}(proof of Theorem \ref{sesquilinear})
It is sufficient to establish the CLT for the linear combinations of  random Hermitian sesquilinear forms:
\[
\sum_{l=1}^{K}[c_lX(l)^{*}A_nY(l)+d_lX(l)^{*}B_nY(l)]~,
\]
where the coefficients $(c_l), (d_l) \in \mathbb{C}^K \times \mathbb{C}^K$ are arbitrary. Also, it holds that
\begin{eqnarray*}
\E[X(l)^{*}A_nY(l)]=\rho(l)tr A_n~,~~~~\E[X(l)^{*}B_nY(l)]=\rho(l)tr B_n~.
\end{eqnarray*}
We use the moment method as in \cite{BaiYao08}. Consider the linear combination of the two sesquilinear forms
\begin{eqnarray*}
\eta_n=\frac{1}{\sqrt{n}}\sum_{l=1}^{K}\big\{c_l[X(l)^{*}A_n Y(l)-\rho(l)tr A_n]+d_l[X(l)^{*}B_n Y(l)-\rho(l)tr B_n]\big\}~,
\end{eqnarray*}
which can be expanded  as follows:
\begin{eqnarray*}
\eta_n&=&\frac{1}{\sqrt{n}}\sum_{l=1}^{K}\Big\{c_l \big[\sum_{u=1}^{n}(X(l)_{u}^{*}Y(l)_{u}-\rho(l))a_{uu}+\sum_{u \neq v}X(l)_{u}^{*}Y(l)_{v}a_{uv}\big]\\
&&+d_l \big[\sum_{u=1}^{n}(X(l)_{u}^{*}Y(l)_{u}-\rho(l))b_{uu}+\sum_{u \neq v}X(l)_{u}^{*}Y(l)_{v}b_{uv}\big]\Big\}\\
&=&\frac{1}{\sqrt{n}}\sum_{e=(u, v)}\Big\{\sum_{l=1}^{K}\big[(c_l\overline{x}_{lu}y_{lu}-c_l\rho(l))a_{uu}+c_l\overline{x}_{lu}y_{lv}a_{uv}\big]\\
&&+\sum_{l=1}^{K}\big[(d_l\overline{x}_{lu}y_{lu}-d_l\rho(l))b_{uu}+d_l\overline{x}_{lu}y_{lv}b_{uv}\big]\Big\}\\
&=&\frac{1}{\sqrt{n}}\sum_{e}(a_e \psi_e+b_e\varphi_e)~,
\end{eqnarray*}
where  $e$ is an edge associated with vertex $u$ and $v$, i.e. $e=(u, v)\in \{1, \cdots, n\}^2$; and
\begin{equation}\label{psi}
\psi_e\triangleq
 \left\{
  \begin{array}{cc}
   \sum_{l=1}^{K}c_l( \overline{x}_{lu} y_{lu}-\rho(l))~, & u=v~,\\
   \sum_{l=1}^{K}c_l \overline{x}_{lu} y_{lv}~, & u\neq v~,\\
  \end{array}
 \right .
\end{equation}
\begin{equation}\label{varphi}
\varphi_e\triangleq
 \left\{
  \begin{array}{cc}
   \sum_{l=1}^{K}d_l( \overline{x}_{lu} y_{lu}-\rho(l))~, & u=v~,\\
   \sum_{l=1}^{K}d_l \overline{x}_{lu} y_{lv}~, & u\neq v~.\\
  \end{array}
 \right .
\end{equation}
Then
\begin{eqnarray}\label{2}
n^{\frac{K}{2}}\eta_n^{K}&=&\sum_{e_1\cdots e_K}(a_{e_1}\psi_{e_1}+b_{e_1}\varphi_{e_1})\cdots (a_{e_K}\psi_{e_K}+b_{e_K}\varphi_{e_K})\\
&=&\sum_{G_1\bigcup G_2}a_{G_1}\psi_{G_1}b_{G_2}\varphi_{G_2}~,\nonumber
\end{eqnarray}
where
\begin{eqnarray*}
a_{G_1}=\prod_{e \in G_1}a_e~,~~\psi_{G_1}=\prod_{e \in G_1}\psi_e~,~~b_{G_2}=\prod_{e \in G_2}b_e~,~~
\varphi_{G_2}=\prod_{e \in G_2}\varphi_e~.
\end{eqnarray*}
To each sum in equation \eqref{2}, we associate a directed graph $G$ by drawing an arrow $u\rightarrow v$ for each factor $e_j=(u, v)$. We denote $G_1$ as a subgraph of $G$ corresponding to the coefficients being $a\psi$, and $G_2$ the remaining: $G_2=G \verb|\|
  G_1$. Besides, to a loop $u\rightarrow u$ corresponds the product $a_{uu}\psi_{uu}=a_{uu}\sum_{l=1}^{K}c_l( \overline{x}_{lu} y_{lu}-\rho(l))$ and to an edge $u\rightarrow v$ $(u \neq v)$ corresponds the product $a_{uv}\psi_{uv}=a_{uv}\sum_{l=1}^{K}c_l \overline{x}_{lu} y_{lv}$. The same holds for $b_{uu}\varphi_{uu}$ and $b_{uv}\varphi_{uv}$.

In the paper of \cite{BaiYao08} (proof of Theorem 7.1), they show that only three types of components in the graph $G$ contribute to a non-negligible term (see Figure \ref{3com}):
\begin{figure}[ht]
\begin{center}
\includegraphics[width=12cm]{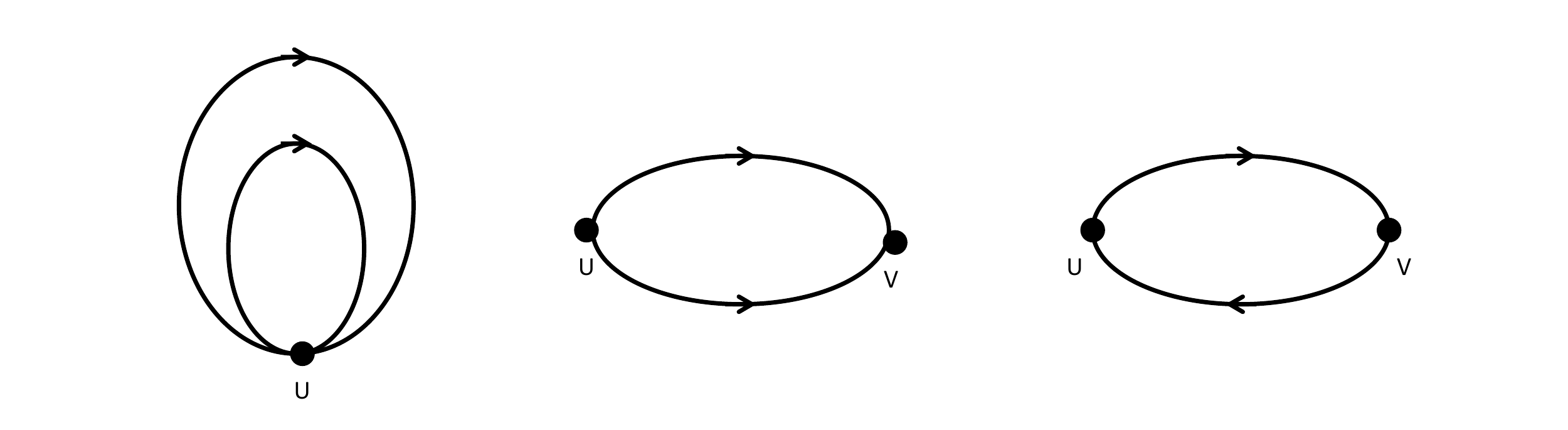}
\caption{three major components in the graph $G$}
\label{3com}
\end{center}
\end{figure}

Because $G_1$ and $G_2$ are subgraphs of $G$, and by the definition in equation \eqref{psi} and \eqref{varphi}, $\psi_e$ differs from $\varphi_e$ only through the coefficient $c_l$ or $d_l$ in front. So the difference between $\psi_e$ and $\varphi_e$ is at most $O(1)$, which means that for the components in the graph $G$ that have $o(1)$ contribution to $En^{K/2}\xi_n^{K}$ (see \cite{BaiYao08} for detail of $\xi_n$) should still have $o(1)$ contribution to $En^{K/2}\eta_n^{K}$. Based on this fact, we get this time that only the influence of the following nine components (in Figure \ref{9com})  counts.
The numbers $k_1, \cdots, k_9$ in Figure \ref{9com} stand for the multiplicity of each component, so by degree of each vertex, we also have the restriction that $4(k_1+\cdots +k_9)=2K$, which means $K$ should be an even number, denoted as $2p$ for convenience.

From the combinatorics, we have this time
\begin{align}\label{3}
\E n^{K/2}\eta_n^{K}&=\E\sum_{G_1\bigcup G_2}a_{G_1}\psi_{G_1}b_{G_2}\varphi_{G_2}\nonumber\\[1mm]
&=\sum_{2(k_1+\cdots +k_9)=K}\frac{\begin{small}\begin{pmatrix}
                                     K \\
                                     2 \\
                                   \end{pmatrix}\end{small}
\begin{small}\begin{pmatrix}
                                     K-2 \\
                                     2 \\
                                   \end{pmatrix}\end{small}\cdots \begin{small}\begin{pmatrix}
                                     2 \\
                                     2 \\
                                   \end{pmatrix}\end{small} \cdot 2^{k_3+k_6+k_9}}{k_1!\cdots k_9!}\times D_1D_2\cdots D_9+o(n^{K/2})\nonumber\\[1mm]
&=\sum_{k_1+\cdots +k_9=p}\frac{(2p)!\cdot 2^{k_3+k_6+k_9}}{2^p\cdot k_1!\cdots k_9!}\times D_1D_2\cdots D_9+o(n^{K/2})~.
\end{align}
\begin{figure}[ht]
\centering
\includegraphics[width=12cm]{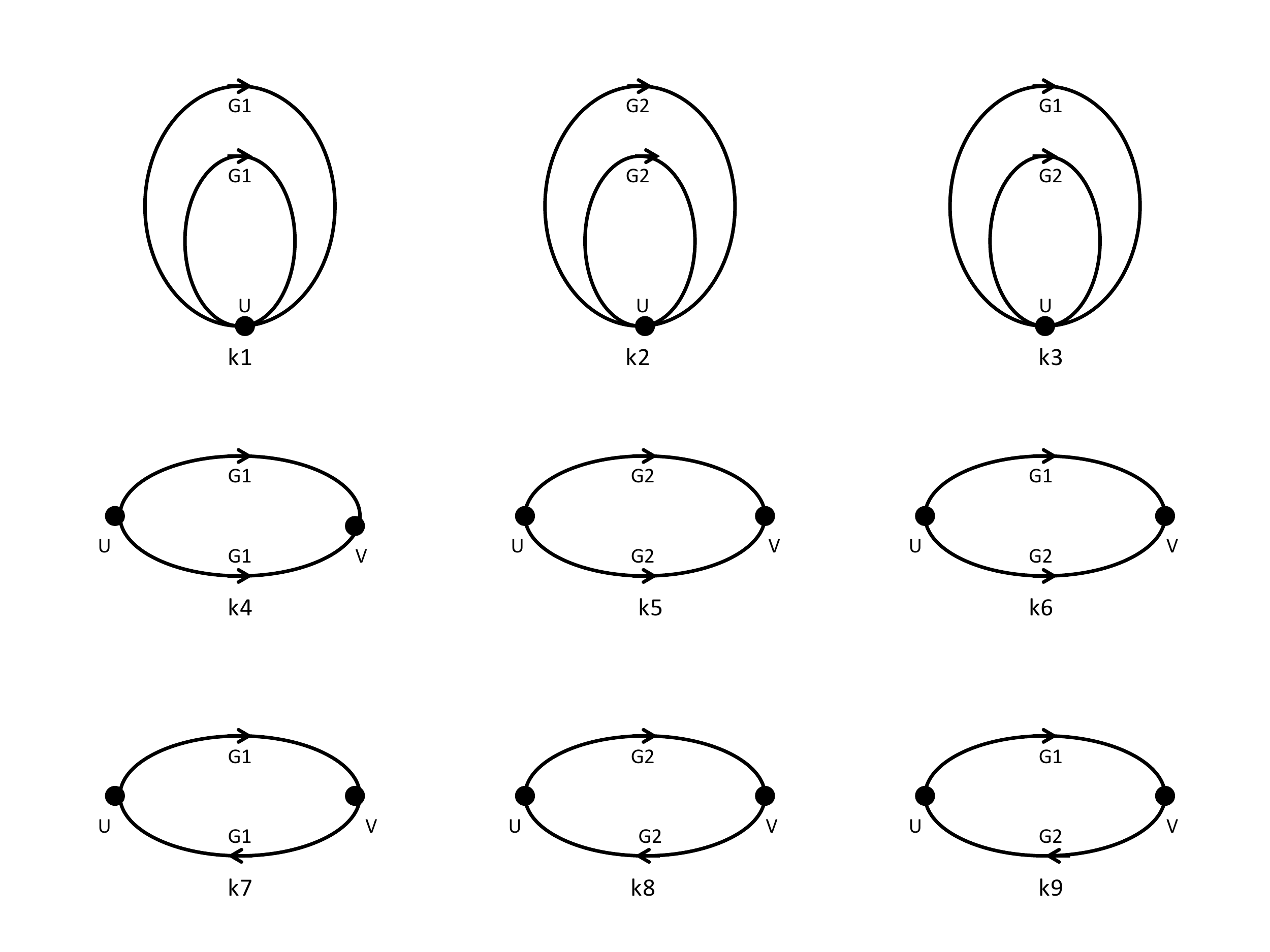}
\caption{nine major components in the graph $G_1\bigcup G_2$}\label{9com}
\end{figure}
The coefficients in front of $D_1D_2\cdots D_9$ is due to the fact that by observing the nine components in Figure \ref{9com}, we find that each component is made of two edges; first we combine two edges in a group in the total of $K$ edges, that is $\begin{tiny}\begin{pmatrix}
                                     K \\
                                     2 \\
                                   \end{pmatrix}\end{tiny}
\begin{tiny}\begin{pmatrix}
                                     K-2 \\
                                     2 \\
                                   \end{pmatrix}\end{tiny}\cdots \begin{tiny}\begin{pmatrix}
                                     2 \\
                                     2 \\
                                   \end{pmatrix}\end{tiny}$; second, the first $k_1$ (also the following $k_2, \cdots, k_9$) groups should be the same, we must exclude the $k_1!\cdots k_9!$ possibilities from the total of $\begin{tiny}\begin{pmatrix}
                                     K \\
                                     2 \\
                                   \end{pmatrix}\end{tiny}
\begin{tiny}\begin{pmatrix}
                                     K-2 \\
                                     2 \\
                                   \end{pmatrix}\end{tiny}\cdots \begin{tiny}\begin{pmatrix}
                                     2 \\
                                     2 \\
                                   \end{pmatrix}\end{tiny}$; and last, for the three components in the last column of Figure \ref{9com}, the two edges in each component belong to different subgraphs (one edge in $G_1$ and the other in $G_2$), so there should be an additional perturbation $2^{k_3+k_6+k_9}$ added, and combine all these facts leads to the result.

Then we specify the terms of $D_1, D_2, \cdots, D_9$ in the following:
\begin{eqnarray*}
D_1&=&\prod_{j=1}^{k_1}\E\Big[a_{u_ju_j}^2\big\{\sum_{l=1}^Kc_l\big(\overline{x}_{lu_j}y_{lu_j}-\rho(l)\big)\big\}^2\Big]\\
&=&\prod_{j=1}^{k_1}a_{u_ju_j}^2\sum_{l, l^{'}}c_l c_{l^{'}}\big[E(\overline{x}_{l1}y_{l1}\overline{x}_{l^{'}1}y_{l^{'}1})-\rho(l)\rho(l^{'})\big]\\
&=&\prod_{j=1}^{k_1}a_{u_ju_j}^2\sum_{l, l^{'}}c_l c_{l^{'}}A_1\\
&\triangleq&\prod_{j=1}^{k_1}a_{u_ju_j}^2 \alpha_1~.
\end{eqnarray*}
Similarly, we have:
\begin{eqnarray*}
D_2&=&\prod_{j=1}^{k_2}\E\Big[b_{u_ju_j}^2\big\{\sum_{l=1}^Kd_l\big(\overline{x}_{lu_j}y_{lu_j}-\rho(l)\big)\big\}^2\Big]\\
&\triangleq&\prod_{j=1}^{k_2}b_{u_ju_j}^2 \beta_1~,
\end{eqnarray*}
\begin{eqnarray*}
D_3&=&\prod_{j=1}^{k_3}\E\Big[a_{u_ju_j}b_{u_ju_j}\sum_{l=1}^Kc_l\big(\overline{x}_{lu_j}y_{lu_j}-\rho(l)\big)\sum_{l=1}^Kd_l\big(\overline{x}_{lu_j}y_{lu_j}-\rho(l)\big)\Big]\\
&\triangleq&\prod_{j=1}^{k_3}a_{u_ju_j}b_{u_ju_j} \gamma_1~,
\end{eqnarray*}
\begin{eqnarray*}
D_4&=&\prod_{j=1}^{k_4}\E\Big[a_{u_jv_j}^2\big(\sum_{l=1}^Kc_l\overline{x}_{lu_j}y_{lv_j}\big)^2\Big]\\
&=&\prod_{j=1}^{k_4}a_{u_jv_j}^2\sum_{l, l^{'}}c_l c_{l^{'}}E\big(\overline{x}_{l1}\overline{x}_{l^{'}1}\big)E\big(y_{l1}y_{l^{'}1}\big)\\
&=&\prod_{j=1}^{k_4}a_{u_jv_j}^2\sum_{l, l^{'}}c_l c_{l^{'}}A_2\\
&\triangleq&\prod_{j=1}^{k_4}a_{u_jv_j}^2 \alpha_2~,
\end{eqnarray*}
\begin{eqnarray*}
D_5&=&\prod_{j=1}^{k_5}\E\Big[b_{u_jv_j}^2\big(\sum_{l=1}^Kd_l\overline{x}_{lu_j}y_{lv_j}\big)^2\Big]\\
&\triangleq&\prod_{j=1}^{k_5}b_{u_jv_j}^2 \beta_2~,
\end{eqnarray*}
\begin{eqnarray*}
D_6&=&\prod_{j=1}^{k_6}\E\Big[a_{u_jv_j}b_{u_jv_j}\big(\sum_{l=1}^Kc_l\overline{x}_{lu_j}y_{lv_j}\big)\big(\sum_{l=1}^Kd_l\overline{x}_{lu_j}y_{lv_j}\big)\Big]\\
&\triangleq&\prod_{j=1}^{k_6}a_{u_jv_j}b_{u_jv_j} \gamma_2~,
\end{eqnarray*}
\begin{eqnarray*}
D_7&=&\prod_{j=1}^{k_7}\E\Big[|a_{u_jv_j}|^2\big(\sum_{l=1}^Kc_l\overline{x}_{lu_j}y_{lv_j}\big)\big(\sum_{l=1}^Kc_l\overline{x}_{lv_j}y_{lu_j}\big)\Big]\\
&=&\prod_{j=1}^{k_7}|a_{u_jv_j}|^2\sum_{l, l^{'}}c_l c_{l^{'}}E\big(\overline{x}_{l1}y_{l^{'}1}\big)E\big(\overline{x}_{l^{'}1}y_{l1}\big)\\
&=&\prod_{j=1}^{k_7}|a_{u_jv_j}|^2\sum_{l, l^{'}}c_l c_{l^{'}}A_3\\
&\triangleq&\prod_{j=1}^{k_7}|a_{u_jv_j}|^2\alpha_3~,
\end{eqnarray*}
\begin{eqnarray*}
D_8&=&\prod_{j=1}^{k_8}\E\Big[|b_{u_jv_j}|^2\big(\sum_{l=1}^Kd_l\overline{x}_{lu_j}y_{lv_j}\big)\big(\sum_{l=1}^Kd_l\overline{x}_{lv_j}y_{lu_j}\big)\Big]\\
&\triangleq&\prod_{j=1}^{k_8}|b_{u_jv_j}|^2\beta_3~,
\end{eqnarray*}
\begin{eqnarray*}
D_9&=&\prod_{j=1}^{k_9}\E\Big[a_{u_jv_j}b_{v_ju_j}\big(\sum_{l=1}^Kc_l\overline{x}_{lu_j}y_{lv_j}\big)\big(\sum_{l=1}^Kd_l\overline{x}_{lv_j}y_{lu_j}\big)\Big]\\
&\triangleq&\prod_{j=1}^{k_9}a_{u_jv_j}b_{v_ju_j}\gamma_3~.
\end{eqnarray*}
Combine these nine terms with equation \eqref{3}, we have
\begin{eqnarray*}
\E\eta_n^{2p}&=&n^{-p}\sum_{k_1+\cdots +k_9=p}\frac{(2p)!\cdot 2^{k_3+k_6+k_9}}{2^p\cdot k_1!\cdots k_9!}\prod_{(j_1\cdots j_9)=(1\cdots 1)}^{(k_1\cdots k_9)} a_{u_{j_1}u_{j_1}}^2\alpha_1^{k_1}b_{u_{j_2}u_{j_2}}^2\\
&&\times \beta_1^{k_2}
a_{u_{j_3}u_{j_3}}b_{u_{j_3}u_{j_3}}\gamma_1^{k_3} a_{u_{j_4}v_{j_4}}^2\alpha_2^{k_4}b_{u_{j_5}v_{j_5}}^2\beta_2^{k_5} a_{u_{j_6}v_{j_6}}b_{u_{j_6}v_{j_6}}\gamma_2^{k_6}\\
&&\times|a_{u_{j_7}v_{j_7}}|^2\alpha_3^{k_7}|b_{u_{j_8}v_{j_8}}|^2\beta_3^{k_8}a_{u_{j_9}v_{j_9}}b_{v_{j_9}u_{j_9}}\gamma_3^{k_9}+o(1)\\
&=&\frac{(2p-1)!!}{n^p}\big(\alpha_1\sum_{u=1}^n a_{uu}^2+\beta_1\sum_{u=1}^n b_{uu}^2+2\gamma_1\sum_{u=1}^n a_{uu}b_{uu}+\alpha_2\sum_{u\neq v} a_{uv}^2\\
&&+\beta_2\sum_{u\neq v}b_{uv}^2+2\gamma_2\sum_{u\neq v} a_{uv}b_{uv}+
\alpha_3\sum_{u\neq v} |a_{uv}|^2+\beta_3\sum_{u\neq v}|b_{uv}|^2\\
&&+2\gamma_3\sum_{u\neq v} a_{uv}b_{vu}
\big)^p+o(1)~,
\end{eqnarray*}
which means that $\eta_n\Longrightarrow \mathcal{N}(0, \sigma^2)$ by the moment method, with
\begin{eqnarray*}\label{4}
\sigma^2&=&\lim_{n\rightarrow \infty}\frac{1}{n}\Big[\alpha_1\sum_{u=1}^n a_{uu}^2+\beta_1\sum_{u=1}^n b_{uu}^2+2\gamma_1\sum_{u=1}^n a_{uu}b_{uu}+\alpha_2\sum_{u\neq v} a_{uv}^2+\beta_2\sum_{u\neq v}b_{uv}^2\nonumber\\
&&+2\gamma_2\sum_{u\neq v} a_{uv}b_{uv}+
\alpha_3\sum_{u\neq v} |a_{uv}|^2+\beta_3\sum_{u\neq v}|b_{uv}|^2+2\gamma_3\sum_{u\neq v} a_{uv}b_{vu}\Big]\nonumber\\
&=&\alpha_1w_1+\beta_1w_2+2\gamma_1w_3+\alpha_2(\tau_1-w_1)+\beta_2(\tau_2-w_2)+2\gamma_2(\tau_3-w_3)\nonumber\\
&&+\alpha_3(\theta_1-w_1)+\beta_3(\theta_2-w_2)+2\gamma_3(\theta_3-w_3)~\\
&=&\sum_{l,l^{'}}c_l c_{l^{'}}A_1w_1+\sum_{l,l^{'}}d_l d_{l^{'}}A_1w_2+2\sum_{l,l^{'}}c_l d_{l^{'}}A_1w_3+\sum_{l,l^{'}}c_l c_{l^{'}}A_2(\tau_1-w_1)\\
&&+\sum_{l,l^{'}}d_l d_{l^{'}}A_2(\tau_2-w_2)+2\sum_{l,l^{'}}c_l d_{l^{'}}A_2(\tau_3-w_3)+\sum_{l,l^{'}}c_l c_{l^{'}}A_3(\theta_1-w_1)\\
&&+\sum_{l,l^{'}}d_l d_{l^{'}}A_3(\theta_2-w_2)+2\sum_{l,l^{'}}c_l d_{l^{'}}A_3(\theta_3-w_3)\\
&=&\sum_{l,l^{'}}c_l c_{l^{'}}\big(A_1w_1+A_2(\tau_1-w_1)+A_3(\theta_1-w_1)\big)\\
&&+\sum_{l,l^{'}}d_l d_{l^{'}}\big(A_1w_2+A_2(\tau_2-w_2)+A_3(\theta_2-w_2)\big)\\
&&+2\sum_{l,l^{'}}c_l d_{l^{'}}\big(A_1w_3+A_2(\tau_3-w_3)+A_3(\theta_3-w_3)\big)~.
\end{eqnarray*}

The proof of  Theorem \ref{sesquilinear} is complete.
\end{proof}

\begin{corollary}\label{real}
Under the same conditions as in Theorem \ref{sesquilinear}, but with real random vectors $\{(x_i,y_i)_{i \in N}\}$, symmetric matrices $\{A_n=[a_{ij}(n)]\}_n$ and $\{B_n=[b_{ij}(n)]\}_n$,  the $2K$-dimensional real-valued random vector:
\[
\big(U(1), \cdots, U(K), V(1), \cdots, V(K)\big)^{T}
\]
converges weakly to a zero-mean $2K$-dimensional Gaussian vector with covariance matrix $B$.
\end{corollary}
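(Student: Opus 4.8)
The plan is to deduce the corollary from Theorem~\ref{sesquilinear} via the Cram\'er--Wold device, after observing that the moment computation in the proof of the theorem specializes directly to the real symmetric case.

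First I would fix arbitrary real vectors $c=(c_1,\dots,c_K)^T$ and $d=(d_1,\dots,d_K)^T$ and form the scalar
\[
\eta_n=\sum_{l=1}^K\bigl[c_l\,U(l)+d_l\,V(l)\bigr]
=\frac1{\sqrt n}\sum_{e}\bigl(a_e\psi_e+b_e\varphi_e\bigr),
\]
where $\psi_e$ and $\varphi_e$ are exactly as in \eqref{psi}--\eqref{varphi} but with every conjugation removed, since $\overline{x}_{li}=x_{li}$. Because the entries of $x_i,y_i$ and of $A_n,B_n$ are real, each $U(l),V(l)$ and hence $\eta_n$ is a real random variable, so it suffices to prove $\eta_n\Rightarrow\N(0,\sigma^2)$ with $\sigma^2=\begin{pmatrix}c\\ d\end{pmatrix}^{T}B\begin{pmatrix}c\\ d\end{pmatrix}$, $B$ the matrix of Theorem~\ref{sesquilinear}. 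Joint weak convergence of $(U(1),\dots,U(K),V(1),\dots,V(K))^T$ to the centred Gaussian with covariance $B$ then follows by Cram\'er--Wold, and the limiting $B$ is automatically positive semidefinite because $\sigma^2=\lim_n\mathrm{Var}(\eta_n)\ge0$ for every $c,d$.

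Next I would run the graph expansion of $\E\eta_n^{m}$ verbatim as in the proof of Theorem~\ref{sesquilinear}. All the combinatorics is unchanged: only the nine two-edge components of Figure~\ref{9com} survive, with the same multiplicities and the same coefficient $(2p)!\,2^{k_3+k_6+k_9}/(2^{p}\,k_1!\cdots k_9!)$, so for $m=2p$ even one again gets $\E\eta_n^{2p}=(2p-1)!!\,\sigma^{2p}+o(1)$, while for $m$ odd the degree identity $4(k_1+\cdots+k_9)=2m$ has no solution and hence $\E\eta_n^{m}\to0$. Evaluating $D_1,\dots,D_9$ now uses only the real factorizations $\E(x_{l1}y_{l1}x_{l'1}y_{l'1})-\rho(l)\rho(l')$, $\E(x_{l1}x_{l'1})\E(y_{l1}y_{l'1})$ and $\E(x_{l1}y_{l'1})\E(x_{l'1}y_{l1})$, i.e.\ the real versions of $A_1,A_2,A_3$; collecting the resulting coefficients $\alpha_i,\beta_i,\gamma_i$ against $w_i,\ \tau_i-w_i,\ \theta_i-w_i$ exactly as in the theorem yields $\sigma^2=\begin{pmatrix}c\\ d\end{pmatrix}^{T}B\begin{pmatrix}c\\ d\end{pmatrix}$. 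Since a Gaussian law is moment-determinate, $\eta_n\Rightarrow\N(0,\sigma^2)$, and the corollary follows. Under the present hypotheses $A_n^{*}=A_n^{T}=A_n$ and $B_n^{*}=B_n$, so in fact $\theta_i=\tau_i$; this merely simplifies the blocks of $B$ (the $A_3$ terms drop out) but does not alter the statement.

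I do not expect a genuine obstacle, since the corollary is a strict specialization of Theorem~\ref{sesquilinear}. The only points that deserve a line of verification are that the odd moments of $\eta_n$ tend to zero --- which is covered by the same parity count on the edge degrees --- and that the expectations $D_1,\dots,D_9$ still factor as above once the complex conjugates are dropped, which is immediate; beyond these, the argument is identical to the one already given for the theorem.
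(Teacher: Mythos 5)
Your proposal is correct and follows exactly the route the paper intends: the paper gives no separate proof of Corollary~\ref{real}, treating it as an immediate specialization of Theorem~\ref{sesquilinear}, whose proof is precisely the Cram\'er--Wold reduction plus the graph/moment expansion you rehearse (with conjugations removed). One small correction to your closing parenthetical: in the real symmetric case $\theta_i=\tau_i$ makes the coefficients of $A_2$ and $A_3$ coincide, so the $A_3$ terms do not drop out --- they merge with the $A_2$ terms as $(\tau_i-w_i)(A_2+A_3)$.
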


Theorem \ref{sesquilinear} can be generalized to the joint distribution of several sesquilinear forms. We present this generalization in the following theorem. Recall that in the proof of Theorem \ref{sesquilinear}, we use the moment method and find the nine major components  presented in Figure \ref{9com},  which all contain two edges. Therefore, if now we consider the $k$ sesquilinear forms as a whole, there should be $\frac{3}{2}k(1+k)$ major components that will lead to a nonnegligible contribution. And each component still has two edges, from the same subgraph (both from $G_i$ $(i=1,\cdots,k)$ or from two different subgraphs (one from $G_i$ and the other from $G_j$ ($i\neq j$)). This means that the $k$ sesquilinear forms packed together only has pairwise covariance function. The proof for other steps is similar and omitted.
\begin{theorem}\label{ksesquilinear}
Let $\big\{A_m=[a_{ij}^{(m)}(n)]\big\}_n$ $m=(1,\cdots, k)$ be $k$ sequences of  $n \times n$ Hermitian matrices and the vector $\{X(l), Y(l)\}_{1 \leq l \leq K}$ are defined as \eqref{1}. Assume that the following limits exists ($m,m^{'}=(1, \cdots, k)$ and $m\neq m^{'}$):
\begin{eqnarray*}
&&w_m=\limm\frac{1}{n}\tr [A_m\circ A_m]~,\quad\quad
  w_{mm^{'}}=\limm \frac 1n\tr [A_m\circ A_{m^{'}}]~,\\
&&\theta_m=\limm\frac{1}{n}\tr [A_m A_m^{*}]~,\quad\quad\quad
  \theta_{mm^{'}}=\limm\frac{1}{n}\tr [A_m A_{m^{'}}^{*}]~,\\
&&\tau_m=\limm\frac{1}{n}\tr [A_m^2]~,\quad\quad\quad\quad~
  \tau_{mm^{'}}=\limm\frac{1}{n}\tr [A_m A_{m^{'}}]~.
\end{eqnarray*}
Denote the sesquilinear forms:
\begin{eqnarray*}
&&U^{(m)}(l)=\frac{1}{\sqrt{n}}\big[X(l)^{*}A_m Y(l)-\rho(l)tr A_m\big]~, m=1, \cdots, k,
\end{eqnarray*}
then the $(K\cdot k)$-dimensional complex-valued random vector:
\[
\big(U^{(1)}(1), \cdots, U^{(1)}(K), U^{(2)}(1), \cdots, U^{(2)}(K), U^{(k)}(1), \cdots, U^{(k)}(K)\big)^{T}
\]
converges weakly to a zero-mean complex-valued vector $W$ whose real and imaginary parts are Gaussian. Moreover, the Laplace transform of $W$ is given by
\[
\E\exp~\left(\begin{pmatrix}
      c_1 \\
      \vdots\\
      c_k \\
    \end{pmatrix}^TW
\right)=\exp~\left[\frac12\begin{pmatrix}
                  c_1 \\
                  \vdots\\
                  c_k \\
             \end{pmatrix}^TB\begin{pmatrix}
                               c_1 & \cdots & c_k\\
                             \end{pmatrix}
\right]~,~~~~c_i \in \mathbb{C}^K~,
\]
where $B$ could be written as \begin{equation*}
B=\left(
\begin{array}{cccc}
 B_{11} & B_{12} & \cdots & B_{1k}\\
 B_{21} & B_{22} & \cdots & B_{2k}\\
 \vdots & \vdots & \ddots & \vdots\\
 B_{k1} & B_{k2} & \cdots & B_{kk}\\
 \end{array}
\right)_{(K\cdot k)\times (K\cdot k)},
\end{equation*}
each block is a $K \times K$ matrices  with entries  (for $l, l^{'}=1, \cdots, K$):
\begin{eqnarray}
&&B_{ii}(l,l^{'})=\cov ~(U^i(l), U^i(l^{'}))=w_i A_1+(\tau_i-w_i)A_2+(\theta_i-w_i)A_3~,\label{bii}\\
&&B_{ij}(l,l^{'})=\cov~ (U^i(l), U^j(l^{'}))=w_{ij}A_1+(\tau_{ij}-w_{ij})A_2+(\theta_{ij}-w_{ij})A_3~, \nonumber \\\label{bij}
\end{eqnarray}
and $A_1$ $A_2$ and $A_3$ are the same as \eqref{aaa}.
\end{theorem}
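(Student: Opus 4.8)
The plan is to reduce the convergence of the $(K\cdot k)$-dimensional vector to a one-dimensional CLT via the Cram\'er--Wold device exactly as in Theorem \ref{sesquilinear}, and then to carry out the moment computation. Concretely, fix arbitrary coefficients $c_1,\dots,c_k\in\mathbb C^K$ and form the linear combination $\eta_n=\frac{1}{\sqrt n}\sum_{m=1}^k\sum_{l=1}^K c^{(m)}_l\big[X(l)^*A_m Y(l)-\rho(l)\tr A_m\big]$. As in \eqref{psi}--\eqref{varphi}, expand $\eta_n$ over directed edges $e=(u,v)$ and attach to each edge the sesquilinear weight; since we now have $k$ matrices instead of two, each term of $n^{K/2}\eta_n^K$ is indexed by an ordered partition $G=G_1\cup\dots\cup G_k$ of the $K$ edges according to which matrix $A_m$ supplies the coefficient. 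The first step is to invoke the graph-theoretic reduction of \cite{BaiYao08}: only those components of the directed multigraph $G$ that are built from exactly two edges survive in the limit, because the per-edge weights differ from those in \cite{BaiYao08} only through the bounded scalars $c^{(m)}_l$, so any component with $o(1)$ contribution there retains $o(1)$ contribution here.

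Next I would enumerate the surviving two-edge components. With $k$ matrices one has, for each of the three "edge-shapes" (double loop, parallel pair $u\to v$ twice, anti-parallel pair $u\to v,\ v\to u$), a choice of an unordered pair $\{A_m,A_{m'}\}$ with possible repetition, giving $\binom{k}{2}+k=\tfrac{k(k+1)}{2}$ matrix-pairings per shape and hence $\tfrac32 k(k+1)$ components in total; these play the role of the nine components of Figure \ref{9com} (the case $k=2$ recovers $3\cdot 3=9$). For each component I would compute its expectation: the loop components produce the scalar $\sum_{l,l'}c^{(m)}_l c^{(m')}_{l'}A_1$ times a trace of a Hadamard product, the parallel components produce the $A_2$-scalar times $\tr[A_m A_{m'}]/n$ or $\tr[A_m^2]/n$, and the anti-parallel components produce the $A_3$-scalar times $\tr[A_m A_{m'}^*]/n$ or $\tr[A_m A_m^*]/n$, precisely mirroring $D_1,\dots,D_9$. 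The combinatorial prefactor $\frac{(2p)!\,2^{(\text{mixed count})}}{2^p\,\prod k_i!}$ is obtained the same way as in \eqref{3}: choose which pairs of the $K$ edges form each component, quotient by permutations of identical blocks, and insert a factor $2$ for each block whose two edges lie in distinct subgraphs $G_m,G_{m'}$. Summing over all such configurations shows $\E\eta_n^{2p}\to (2p-1)!!\,(\sigma^2)^p$ with $\sigma^2=\sum_{m,m'}(c^{(m)})^T\big[w_{mm'}A_1+(\tau_{mm'}-w_{mm'})A_2+(\theta_{mm'}-w_{mm'})A_3\big]c^{(m')}$ (with $w_{mm}=w_m$, etc.), while odd moments vanish; by the method of moments $\eta_n\Rightarrow\mathcal N(0,\sigma^2)$, and reading off $\sigma^2$ as the quadratic form $(c_1,\dots,c_k)^T B (c_1,\dots,c_k)$ identifies $B$ with the stated block structure \eqref{bii}--\eqref{bij}.

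The main obstacle is bookkeeping rather than a new idea: one must be careful that the reduction to two-edge components from \cite{BaiYao08} is genuinely insensitive to replacing $\{A_n,B_n\}$ by $\{A_1,\dots,A_k\}$ — this is the content of the remark preceding the theorem and hinges only on the weights differing by $O(1)$ bounded factors — and that the enumeration of the $\tfrac32 k(k+1)$ components, together with the correct combinatorial multiplicities and the factor $2^{(\cdot)}$ for mixed blocks, exactly reproduces the multinomial expansion of $(\sigma^2)^p$. Once these are in place, the identification of the covariance blocks $B_{ij}$ is immediate from matching like terms in $\sum_{m,m'} c_l^{(m)} c_{l'}^{(m')}(\cdots)$, and the passage from the real Laplace/moment statement to the complex vector $W$ with Gaussian real and imaginary parts is identical to the $k=2$ case, so those steps can be omitted as the authors indicate.
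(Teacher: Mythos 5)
Your proposal is correct and follows essentially the same route as the paper: the authors also reduce to the moment-method argument of Theorem \ref{sesquilinear}, note that only two-edge components survive so that with $k$ matrices there are $\tfrac32 k(k+1)$ nonnegligible component types (recovering the nine of Figure \ref{9com} when $k=2$), and conclude that only pairwise covariances appear, yielding the block structure \eqref{bii}--\eqref{bij}. Your enumeration of the matrix pairings, the combinatorial prefactor, and the identification of $\sigma^2$ as the quadratic form in $B$ all match the argument the paper sketches and omits.
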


Here we give an application  related to the existing literature on large-dimensional covariance matrices. In \cite{guang}, they establish the central limit theorem of the random quadratic forms $s_1^T(SS^T)^is_1$, where $S=(s_1,\cdots, s_k)$, $s_i=\frac{1}{\sqrt n}(v_{i1} \cdots v_{in})^T$, $\{v_{ij}\}$ are i.i.d. with  $\E v_{11}=0~, \E v^2_{11}=1$, $\E v^4_{11}=\nu_4 < \infty$. This $s_1^T(SS^T)^is_1$ can be written as a linear combination of a series of random quadratic forms whose random matrices involved are independent of the random vector. Their Lemma 3.2 states such joint distribution of these random quadratic forms, which can be restated and proved using our Theorem \ref{ksesquilinear}.
\begin{proposition}\label{propan}[\cite{guang}]
Let $S_1=(s_2\cdots s_k)$, independent of $s_1$. Then the random vector
\[
\sqrt{\frac n 2}\left(
\begin{array}{c}
s_1^T(S_1S_1^T)s_1-y_n\int x dG_{y_n}(x)\\
\vdots\\
s_1^T(S_1S_1^T)^is_1-y^i_n\int x^i dG_{y_n}(x)\\[2mm]
s_1^Ts_1-1
\end{array}\right)
\]
is asymptotically normal with mean $0$ and covariance matrix
\[
\left(
\begin{array}{cccc}
B_{11} & \cdots & B_{1i} & B_{1\, i+1}\\
\vdots & \ddots & \vdots &\vdots\\
B_{i1} & \cdots & B_{ii} & B_{i\, i+1}\\
B_{i+1\,1} & \cdots & B_{i+1\,i} & B_{i+1\, i+1}\\
\end{array}
\right)_{(i+1)\times(i+1)}~,\]
where
\begin{align}
&B_{mm}=\left\{
\begin{array}{lllll}
\frac{\nu_4-1}{2}\cdot f^2(m)+f(2m)-f^2(m)~,&&& &1\leq m\leq i~,\\[1mm]
\frac{\nu_4-1}{2}~,&&&&  m=i+1~,
\end{array}\right.\\
&B_{ml}=\left\{
\begin{array}{ll}
\frac{\nu_4-1}{2}\cdot f(m)f(l)+f(m+l)-f(m)f(l)~, & 1\leq m, l\leq i~,\\[1mm]
\frac{\nu_4-1}{2}\cdot f(m)~, & 1 \leq m \leq i, l=i+1~,
\end{array}\right.
\end{align}
here $y_n=k/n$, $y=\lim y_n$, $f(m):=y^m\int x^m dG_y(x)$, and $G_y(x)$ is the limiting spectral distribution of $\frac n k S_1S^T_1$.
\end{proposition}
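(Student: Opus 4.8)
The plan is to condition on $S_1$ and recognise each coordinate as a \emph{quadratic} form $s_1^T A_m s_1$ in the single random vector $v:=\sqrt n\,s_1$, whose entries $v_{1j}$ are i.i.d.\ with mean $0$, variance $1$ and fourth moment $\nu_4$, and where, given $S_1$, the matrices $A_m:=(S_1S_1^T)^m$ for $m=1,\dots,i$ and $A_{i+1}:=I_n$ are deterministic real symmetric $n\times n$ matrices. This is precisely the setting of the real-symmetric version of Theorem~\ref{ksesquilinear} (the $k$-matrix analogue of Corollary~\ref{real}) with $K=1$, $k=i+1$, $X(1)=Y(1)=v$, and $\rho(1)=\E v_{11}^2=1$; the associated sesquilinear (here quadratic) forms are $U^{(m)}(1)=\tfrac1{\sqrt n}\bigl(v^TA_mv-\tr A_m\bigr)$. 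One then has the exact identity $\sqrt{n/2}\,\bigl(s_1^TA_ms_1-c_m\bigr)=\tfrac1{\sqrt2}U^{(m)}(1)+\tfrac1{\sqrt2}\sqrt n\bigl(\tfrac1n\tr A_m-c_m\bigr)$, where $c_m=y_n^m\int x^m\,dG_{y_n}(x)$ for $m\le i$ and $c_{i+1}=1$, so that the whole problem reduces to (i) controlling the deterministic-equivalent centering remainder, (ii) computing the covariance produced by Theorem~\ref{ksesquilinear}, and (iii) removing the conditioning.

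For (i): since $\tfrac1n\tr[(S_1S_1^T)^m]=y_n^m\cdot\tfrac1n\tr[(\tfrac nkS_1S_1^T)^m]$ is $y_n^m$ times the $m$-th moment of the empirical spectral distribution of the sample covariance matrix $\tfrac nkS_1S_1^T$, its deviation from the corresponding moment of $G_{y_n}$ is $O_P(1/n)$ (the CLT for linear spectral statistics, already invoked in \cite{guang}), hence $\sqrt n\bigl(\tfrac1n\tr A_m-c_m\bigr)\to0$; in particular $\tfrac1n\tr A_m\to f(m):=y^m\int x^m\,dG_y(x)$ a.s., consistently with the statement, and the $(i+1)$-st remainder is identically zero because $\tfrac1n\tr I_n=1=c_{i+1}$. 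For (ii): all $A_m$ being real symmetric, the quantities $\theta_{mm'}$ and $\tau_{mm'}$ in Theorem~\ref{ksesquilinear} coincide and equal $\limn\tfrac1n\tr[A_mA_{m'}]$, which is $f(m+m')$ when $m,m'\le i$, equals $f(m)$ when $m\le i$ and $m'=i+1$, and equals $1$ when $m=m'=i+1$. For the Hadamard terms $\tfrac1n\tr[A_m\circ A_{m'}]=\tfrac1n\sum_{u=1}^n[A_m]_{uu}[A_{m'}]_{uu}$ one uses that the diagonal entries $[(S_1S_1^T)^m]_{uu}$ are identically distributed (row-exchangeability of $S_1$), with common mean $\tfrac1n\tr[(S_1S_1^T)^m]\to f(m)$ and variance $O(1/n)$, so the empirical average of $[A_m]_{uu}[A_{m'}]_{uu}$ concentrates at $f(m)f(m')$ (with the natural reading $f=1$ when an index is $i+1$, since $[I_n]_{uu}=1$); thus $w_{mm'}=f(m)f(m')$. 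Substituting $A_1=\E v_{11}^4-\rho(1)^2=\nu_4-1$, $A_2=(\E v_{11}^2)^2=1$, $A_3=(\E v_{11}^2)^2=1$ into \eqref{bii}--\eqref{bij} gives, in every case, $\cov\bigl(U^{(m)}(1),U^{(m')}(1)\bigr)=(\nu_4-3)f(m)f(m')+2f(m+m')$; multiplying by the factor $\tfrac12$ coming from the $\sqrt{n/2}$ normalisation reproduces exactly the matrix $(B_{ml})$ of the statement — e.g.\ $\tfrac12\bigl[(\nu_4-3)f(m)^2+2f(2m)\bigr]=\tfrac{\nu_4-1}{2}f^2(m)+f(2m)-f^2(m)$, while for $l=i+1$ one gets $\tfrac12(\nu_4-3)f(m)+f(m)=\tfrac{\nu_4-1}{2}f(m)$, and the last diagonal entry is $\tfrac12(\nu_4-1)$.

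Finally, for (iii): the limits above hold almost surely (or at least in probability, from which one passes to an a.s.\ subsequence), so conditionally on $S_1$ the distribution of the normalised vector converges weakly to the centred Gaussian with covariance $(B_{ml})$; since this limiting law is free of $S_1$, a bounded-convergence argument applied to the conditional characteristic functions removes the conditioning and yields the stated unconditional CLT. I expect the main obstacle to be the Hadamard-trace computation, i.e.\ establishing that the diagonal entries of powers of a sample covariance matrix are self-averaging (mean $f(m)$, vanishing variance) so that $\tfrac1n\tr[(S_1S_1^T)^m\circ(S_1S_1^T)^{m'}]\to f(m)f(m')$; the only other delicate point is the replacement of the conditional-mean centering $\tfrac1n\tr A_m$ by $y_n^m\int x^m\,dG_{y_n}(x)$, which rests on the $O(1/n)$ size of linear-spectral-statistic fluctuations for $\tfrac nkS_1S_1^T$.
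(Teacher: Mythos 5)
Your proposal is correct and follows essentially the same route as the paper: both reduce the statement to Theorem~\ref{ksesquilinear} with $K=1$, $k=i+1$, matrices $(S_1S_1^T)^m$ and $I_n$, compute $\tau_{mm'}=\theta_{mm'}=f(m+m')$ and $w_{mm'}=f(m)f(m')$, and substitute into \eqref{bii}--\eqref{bij}. The only difference is that you make explicit several points the paper's proof leaves implicit (conditioning on the random $S_1$, replacing the trace centering by $y_n^m\int x^m\,dG_{y_n}$ via the $O_P(1/n)$ linear-spectral-statistic fluctuations, and the self-averaging of the diagonal entries behind the Hadamard limit), which is a welcome tightening rather than a different argument.
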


\noindent The proof of this Proposition is in Section \ref{se4}.

\section{Two applications in spiked population models}\label{application}
It is well known that the empirical spectral distribution of a large-dimensional sample covariance matrix tends to  the Mar\v{c}enko-Pastur distribution $F_y(dx)$:
\[
F_y(dx)=\frac{1}{2\pi xy}\sqrt{(x-a_y)(b_y-x)}dx,~~~~~~~~~~~a_y\leq x\leq b_y~,
\]
where $y=\lim p/n$, $a_y=(1-\sqrt y)^2$ and $b_y=(1+\sqrt y)^2$ under fairly general conditions, see \cite{MP}. Moreover, under a fourth moment assumption, the smallest and largest sample eigenvalues converge almost surely to the end points $a_y$ and $b_y$, respectively.

While in recent empirical data analysis, there is often the case that  some eigenvalues are well separated from the bulk, in order to explain such phenomenon, \cite{John01}  proposed a {\em spiked population model}, where all the population eigenvalues equal to $1$ except some fixed number of them (spikes). Clearly, the spiked population model can be considered as a finite-rank perturbation of the {\em null case} where all the population eigenvalues equal to $1$. Then there raises the question that what's the influence of these spikes on the individual sample eigenvalues.  \cite{BBP05} first unveiled the phase transition phenomenon in the case of complex Gaussian variables, stating that when the population spikes are above (or under) a certain threshold $1+\sqrt y$ (or $1-\sqrt y$), the corresponding extreme sample eigenvalues will jump out of the bulk (become outliers). \cite{Baik06} consider more general random variable: complex or real and not necessarily Gaussian and they found the same transition phenomenon.
As for the central limit theorem, \cite{BBP05} proposed the result for the largest sample eigenvalue in the Gaussian complex case. \cite{Paul07}  found the Gaussian limiting distribution when the population vector is real Gaussian and all the spikes of the population covariance matrix are simple. \cite{BaiYao08}  established the central limit theorem for the largest as well as for the smallest sample eigenvalues under general population variables.

Beyond the sample covariance matrix, there exist many recent and related results concerning the almost sure limit as well as the central limit theorem of the extreme eigenvalue of the Wigner matrix or general Hermitian matrix perturbed by a low rank matrix. Interested reader is referred to \cite{Ca09},  \cite{Benaych11}, \cite{BenaychNadakuditi11}, ~\cite{Ca12},  \cite{AY12}, \cite{RS12} and \cite{Pizzo}, for a selection of such results.

In this section, we establish two new central limit theorems for the extreme sample eigenvalues as well as  sample eigenvector projections. First,  Section \ref{pre} gives  introductions on the model and some preliminary results.  In Section \ref{application11}, a joint central limit theorem is  proposed for groups of packed sample eigenvalues corresponding to the spikes (primary CLT in \cite{BaiYao08} concerns only one such group). Next in Section \ref{application2}, assuming the simple spiked case, we derive a joint CLT for the extreme sample eigenvalue and its corresponding sample eigenvector projection. Such CLT is a new result; indeed, we do not know any CLT related to spike eigenvectors from the literature. Finally, both applications are based on the general CLT for random sesquilinear forms in our Theorem \ref{sesquilinear}.

\subsection{Some notation and preliminary results}\label{pre}

Suppose   the zero-mean complex-valued random vector $x=(\xi^T, \eta^T)^T$, where $\xi=(\xi(1), \cdots, \xi(M))^T$, $\eta=(\eta(1), \cdots, \eta(p))^T$ are independent, of dimension $M$ (fixed) and $p$ ($p \rightarrow \infty$), respectively. And denote $x_i=(\xi_i^T, \eta_i^T)^T$ $(i=1,\cdots, n)$ the $n$ i.i.d. copies of $x$. Moreover, assume that $E|| x||^4 < \infty$ and the coordinates of $\eta$ are independent and identically distributed with unit variance.

The population covariance matrix of the vector $x$ is then
\begin{align}\label{v}
V=\cov(x)=\begin{pmatrix}
           \Sigma & 0 \\
           0 & I_p \\
         \end{pmatrix}~.
\end{align}
Assume $\Sigma$ has the spectral decomposition:
\begin{equation}\label{model}
 \Sigma=U \diag(\underbrace{a_1, \cdots, a_1}_{n_1},
  \cdots,
  \underbrace{a_k, \cdots, a_k}_{n_k})~U^{*},
\end{equation}
where $U$ is an unitary matrix, the $a_i$'s are positive and different from $1$, and the $n_i$'s satisfy $n_1+\cdots +n_k=M$. Besides, let $M_a$ be the number of $j's$ such that $a_j<1-\sqrt y$ (here, $y$ is the limit of dimension to sample size ratio: $y=\lim p/n \in (0,1)$), and let $M_b$ be the number of $j's$ such that $a_j>1+\sqrt y$. More specifically, if we arrange the $a_i's$ in decreasing order, then $\Sigma$ could be diagonalized as
\begin{equation*}
 \diag(\underbrace{\underbrace{a_1, \cdots, a_1}_{n_1},
  \cdots,
  \underbrace{a_{M_b}, \cdots, a_{M_b}}_{n_{M_b}}}_{>1+\sqrt y},
  \cdots,
  \underbrace{\underbrace{a_{k-M_a+1}, \cdots, a_{k-M_a+1}}_{n_{k-M_a+1}},
  \cdots,
  \underbrace{a_k, \cdots, a_k}_{n_k}}_{<1-\sqrt y})~.
\end{equation*}

The sample covariance matrix of $x$ is
\[
S_n=\frac1n\sum_{i=1}^n x_i x_i^{*}~,
\]
which can be partitioned as
\[
S_n=\begin{pmatrix}
      S_{11} & S_{12} \\
      S_{21} & S_{22} \\
    \end{pmatrix}=\begin{pmatrix}
                    X_1X_1^{*} & X_1X_2^{*} \\
                    X_2X_1^{*} & X_2X_2^{*} \\
                  \end{pmatrix}=\frac1n\begin{pmatrix}
                                         \sum \xi_{i}\xi_{i}^{*} & \sum \xi_{i}\eta_{i}^{*} \\
                                         \sum \eta_{i}\xi_{i}^{*} & \sum \eta_{i}\eta_{i}^{*} \\
                                       \end{pmatrix}~,
                                       \]
with
\[
X_1=\frac1{\sqrt{n}}(\xi_1, \cdots, \xi_n)_{M \times n}:=\frac1{\sqrt{n}} \xi_{1:n}~,
\]
\[
X_2=\frac1{\sqrt{n}}(\eta_1, \cdots, \eta_n)_{p \times n}:=\frac1{\sqrt{n}} \eta_{1:n}~.
\]
Since $M$ is fixed and $p\rightarrow \infty$, $n \rightarrow \infty$ such that $p/n \rightarrow y \in (0,1)$, the empirical spectral distribution of the eigenvalues of $S_n$, as well as the one of $S_{22}$, converges to the Mar\v{c}enko-Pastur distribution $F_y(dx)$.
For real constant  $\lambda\notin [a_y,b_y]$,  we define the following integrals  with respect to $F_y(dx)$:
\begin{align}
&m_0(\lambda):=\int \frac{1}{\lambda-x}F_y(dx)~,~~~~~~
m_1(\lambda):=\int \frac{x}{\lambda-x}F_y(dx)~,\nonumber\\
&m_2(\lambda):=\int \frac{x^2}{(\lambda-x)^2}F_y(dx)~,~~
m_3(\lambda):=\int \frac{x}{(\lambda-x)^2}F_y(dx)~,\nonumber\\
&m_4(\lambda):=\int \frac{1}{(\lambda-x)^2}F_y(dx)~,~~
m_5(\lambda):=\int \frac{x}{(\lambda-x)^3}F_y(dx)~,\nonumber\\
&m_6(\lambda):=\int \frac{x^2}{(\lambda-x)^4}F_y(dx)~,~~
m_7(\lambda):=\int \frac{x^2}{(\lambda-x)^3}F_y(dx)~.\label{mi}
\end{align}

Let $l_1\geq l_2\geq \cdots \geq l_p$ be the eigenvalues of $S_n$.
Let $s_j=n_1+\cdots +n_j$ for $1\leq j\leq M_b$ or $k-M_a+1 \leq j \leq k$. \cite{Baik06} derive the almost sure limit of those extreme sample eigenvalues. They have proven that for each $m \in \{1,\cdots, M_b\}$ or $m \in \{k-M_a+1,\cdots, k\}$ and $s_{m-1}<i\leq s_m$,
\[
l_i\rightarrow \lambda_m=\phi(a_m):= a_m+\frac{y a_m}{a_m-1}~
\]
almost surely. In other words, if a spike eigenvalue $a_m$ lies outside the interval $[1-\sqrt y, 1+\sqrt y]$, then the $n_m$-packed sample eigenvalues $\{l_i, i \in J_m\}$ (associated to $a_m$) converge to the limit $\lambda_m$, which is outside the support of the M-P distribution $[a_y,b_y]$ (here, we denote $J_m=(s_{m-1},s_m]$ when $m \in \{1,\cdots, M_b\}$ or $m \in \{k-M_a+1,\cdots, k\}$).

Recently  \cite{BaiYao08} derives the CLT for those extreme sample eigenvalues. More specifically, let $\delta_{n,i}:=\sqrt n(l_i-\lambda_m)$, where $m \in \{1,\cdots, M_b\}$ or $m \in \{k-M_a+1,\cdots, k\}$, $i \in J_m$, and $\lambda_m=\phi(a_m)\notin [a_y,b_y]$ as defined before.
They have proven that $\delta_{n,i}$ tends to the solution $v$ of the following equation:
\begin{eqnarray}\label{solution}
\Big|-\big[U^{*}R_n(\lambda_m)U\big]_{mm}+v\big(1+ym_3(\lambda_m)a_m\big)I_{n_m}+o_n(1)\Big|=0~,
\end{eqnarray}
here $|\cdot|$ stands for determinant, $\big[U^{*}R_n(\lambda_m)U\big]_{mm}$ is the $m$-th diagonal block of $U^{*}R_n(\lambda_m)U$ corresponding to the index $\{u,v \in J_m\}$, and
\begin{eqnarray}
&&R_n(\lambda)=\frac{1}{\sqrt{n}}\Big\{\xi_{1:n}\big(I+A_n(\lambda)\big)\xi_{1:n}^{*}-\Sigma tr\big(I+A_n(\lambda)\big)\Big\}~,\nonumber\\
&&A_n(\lambda)=X_2^{*}(\lambda I-X_2X_2^{*})^{-1}X_2~\nonumber.
\end{eqnarray}
 Let $R(\lambda)$ denote the $M \times M$ matrix limit of  $R_n(\lambda)$, and $\widetilde{R}(\lambda):= U^{*}R(\lambda)U$. According to \eqref{solution}, it says that $\delta_{n,i}$ tends to an eigenvalue of the matrix $(1+ym_3(\lambda_m)a_m)^{-1}[\widetilde{R}(\lambda_m)]_{mm}$. Besides, since the index $i$ is arbitrary over $J_m$, all the $J_{m}$ random variables $\sqrt{n}\{l_i-\lambda_m,~i\in J_m\}$ converge almost surely to the set of eigenvalues of this matrix. The following theorem in \cite{BaiYao08} identifies the covariance of the elements within the limit matrix $R(\lambda)$. For simplicity, we only consider the real case in all the following unless otherwise noted.

\begin{proposition}\label{ori}[\cite{BaiYao08}]
Assume that the variables $\xi$ and $\eta$ are real, then the random matrix $R=R_{ij}$ is symmetric, with zero-mean Gaussian entries, having the following covariance function: for $1\leq i\leq j\leq M$ and $1\leq i^{'}\leq j^{'}\leq M$
\begin{align*}\label{yao}
&~~~~\cov~(R(i, j), R(i^{'}, j^{'}))\nonumber\\
&=w\Big\{E[\xi(i)\xi(j)\xi(i^{'})\xi(j^{'})]-\Sigma_{ij}\Sigma_{i^{'}j^{'}}\Big\}
+\big(\theta-w\big)
\Sigma_{ij^{'}}\Sigma_{i^{'}j}\nonumber\\
&~~~+\big(\theta-w\big)\Sigma_{ii^{'}}\Sigma_{jj^{'}}~,
\end{align*}
where the constants $\theta$ and $w$ are defined as follows:
\begin{eqnarray*}
&&\theta=1+2ym_1(\lambda)+ym_2(\lambda)~,\\
&&w=1+2ym_1(\lambda)+\bigg(\frac{y(1+m_1(\lambda))}{\lambda-y(1+m_1(\lambda))}\bigg)^2~.
\end{eqnarray*}
\end{proposition}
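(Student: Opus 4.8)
The plan is to recognise each entry of $R_n(\lambda)$ as a real symmetric \emph{quadratic} form in the $\xi$‑variables, with the random --- but $\xi$‑independent --- matrix $\mathcal{A}_n:=I+A_n(\lambda)$, and then apply Corollary~\ref{real} conditionally on the $\eta$‑sample. For $1\le l\le M$ let $X(l)=(\xi_1(l),\dots,\xi_n(l))^{T}$ denote the $n$‑vector formed by the $l$‑th coordinate of $\xi$ over the $n$ observations; since $R_n(\lambda)=\frac1{\sqrt n}\{\xi_{1:n}\mathcal{A}_n\xi_{1:n}^{*}-\Sigma\,\tr\mathcal{A}_n\}$, one has for $1\le i\le j\le M$
\[
[R_n(\lambda)]_{ij}=\frac1{\sqrt n}\big[X(i)^{*}\mathcal{A}_n X(j)-\Sigma_{ij}\,\tr\mathcal{A}_n\big]~,
\]
so $R_n(\lambda)$ is symmetric, has mean $0$, and the vector of its $M(M+1)/2$ on‑or‑above‑diagonal entries is exactly of the form treated in Theorem~\ref{sesquilinear}: take $K=M(M+1)/2$ ``groups'' indexed by the pairs $(i,j)$, the $(i,j)$‑group carrying $X$‑vector $X(i)$, $Y$‑vector $X(j)$, and covariance parameter $\rho(i,j)=\Sigma_{ij}$, with a single matrix $\mathcal{A}_n$ (one may set $B_n\equiv0$ in Corollary~\ref{real} and keep only the block $B_{11}$).

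Because $\xi$ and $\eta$ are independent, $\{X(l)\}_{l\le M}$ is independent of $\mathcal{A}_n$, so I would condition on $\eta_{1:n}$. On the almost sure event on which the three quantities $w^{(n)}:=\frac1n\tr[\mathcal{A}_n\circ\mathcal{A}_n]$, $\theta^{(n)}:=\frac1n\tr[\mathcal{A}_n\mathcal{A}_n^{*}]$ and $\tau^{(n)}:=\frac1n\tr[\mathcal{A}_n^2]$ converge --- which holds by standard large‑dimensional random matrix theory, with $\theta^{(n)}=\tau^{(n)}$ since $\mathcal{A}_n$ is real symmetric --- Corollary~\ref{real} gives that, conditionally, the entries of $R_n(\lambda)$ converge jointly to a zero‑mean Gaussian matrix whose covariances are read off the block formula $B_{11}(l,l')=w_1A_1+(\tau_1-w_1)A_2+(\theta_1-w_1)A_3$, with $w_1,\tau_1,\theta_1$ replaced by $w:=\lim w^{(n)}$ and $\theta:=\lim\theta^{(n)}=\lim\tau^{(n)}$, and with $A_1=\E[\xi(i)\xi(j)\xi(i')\xi(j')]-\Sigma_{ij}\Sigma_{i'j'}$, $A_2=\Sigma_{ii'}\Sigma_{jj'}$, $A_3=\Sigma_{ij'}\Sigma_{i'j}$. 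Because $\tau_1=\theta_1$ here, the coefficients of $A_2$ and of $A_3$ are both $\theta-w$, which is precisely the covariance claimed in Proposition~\ref{ori}. Finally, since the limiting covariance is deterministic, bounded convergence applied to the conditional characteristic functions removes the conditioning.

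It remains to evaluate $\theta$ and $w$. From $\mathcal{A}_n=I+A_n(\lambda)$ we get $\theta^{(n)}=1+\frac2n\tr A_n(\lambda)+\frac1n\tr A_n(\lambda)^2$, and since $\tr A_n(\lambda)^k=\tr\big[\big((\lambda I-S_{22})^{-1}S_{22}\big)^k\big]$ is a trace over the $p\times p$ block $S_{22}$, whose empirical spectral distribution converges to $F_y$, one obtains $\frac1n\tr A_n(\lambda)\to ym_1(\lambda)$ and $\frac1n\tr A_n(\lambda)^2\to ym_2(\lambda)$, hence $\theta=1+2ym_1(\lambda)+ym_2(\lambda)$. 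For $w$, write $w^{(n)}=\frac1n\sum_{u=1}^n\big(1+[A_n(\lambda)]_{uu}\big)^2=1+\frac2n\tr A_n(\lambda)+\frac1n\sum_{u}[A_n(\lambda)]_{uu}^2$; the key point is that every diagonal entry $[A_n(\lambda)]_{uu}=\frac1n\eta_u^{*}(\lambda I-S_{22})^{-1}\eta_u$ concentrates around one and the same deterministic value, which one identifies --- via a leave‑one‑column‑out (rank‑one) perturbation of $S_{22}$, the concentration $\frac1n\eta_u^{*}M\eta_u\approx\frac1n\tr M$ for $M$ independent of $\eta_u$, and the Mar\v{c}enko--Pastur self‑consistent equation (equivalently the identity $1+m_1(\lambda)=\lambda m_0(\lambda)$) --- as $\frac{ym_0(\lambda)}{1-ym_0(\lambda)}=\frac{y(1+m_1(\lambda))}{\lambda-y(1+m_1(\lambda))}$. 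Hence $\frac1n\sum_u[A_n(\lambda)]_{uu}^2\to\big(\frac{y(1+m_1(\lambda))}{\lambda-y(1+m_1(\lambda))}\big)^2$ and $w=1+2ym_1(\lambda)+\big(\frac{y(1+m_1(\lambda))}{\lambda-y(1+m_1(\lambda))}\big)^2$, matching the stated constants.

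The step I expect to be the main obstacle is this last one: establishing that the diagonal entries of the resolvent‑type matrix $A_n(\lambda)$ concentrate \emph{uniformly} around a single deterministic limit, and simplifying that limit into the stated closed form. This needs the rank‑one perturbation identities together with quadratic‑form concentration (where the $\E\|x\|^4<\infty$ assumption enters) and the quadratic Mar\v{c}enko--Pastur equation to reconcile the two natural expressions for the limit; by contrast, once these random‑matrix estimates are in place, inserting $w$, $\theta$ and $A_1,A_2,A_3$ into Corollary~\ref{real} is immediate.
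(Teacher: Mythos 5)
Your proposal is correct and follows essentially the same route the paper itself uses: although Proposition~\ref{ori} is only cited from \cite{BaiYao08}, the paper's proof of its generalization (Theorem~\ref{joint}) and of Lemma~\ref{abjoint} proceeds exactly as you do --- viewing the entries of $R_n(\lambda)$ as $K=M(M+1)/2$ quadratic forms in the $\xi$-vectors with matrix $I+A_n(\lambda)$, applying Corollary~\ref{real} conditionally on $\eta$, and evaluating $w=\lim\frac1n\sum_u(1+[A_n]_{uu})^2$ and $\theta=\tau=\lim\frac1n\tr(I+A_n)^2$ via the trace limits $\frac1n\tr A_n\to ym_1$, $\frac1n\tr A_n^2\to ym_2$ and the leave-one-out concentration of $[A_n]_{uu}$ around $y(1+m_1)/(\lambda-y(1+m_1))$. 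Your identification $A_2=\Sigma_{ii'}\Sigma_{jj'}$, $A_3=\Sigma_{ij'}\Sigma_{i'j}$ and the observation $\tau_1=\theta_1$ giving the common coefficient $\theta-w$ also match the paper's computation.
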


\subsection{Application 1: Asymptotic joint distribution of two groups of extreme sample eigenvalues in the spiked population model}\label{application11}
 In this subsection, we consider the asymptotic joint distribution of two groups of extreme sample eigenvalues, say, $\{l_i,~i \in J_m\}$ and $\{l_{i^{'}},~i^{'} \in J_{m^{'}}\}$ ($m\neq m^{'}$) when $\Sigma$ has the structure \eqref{model}, namely the random vector
   \begin{equation*}
\left(
\begin{array}{c}
 \{\sqrt{n}(l_{i}-\lambda_m),~i \in J_m\}\\
 \{\sqrt{n}(l_{i^{'}}-\lambda_{m^{'}}),~i^{'} \in J_{m^{'}}\}\\
 \end{array}
\right)~.
\end{equation*} Following the work of \cite{BaiYao08}, we know that this  $n_m+n_{m^{'}}$ dimensional random
vector
converges to the eigenvalues of the symmetric $(n_m+n_{m^{'}})\times (n_m+n_{m^{'}})$ random matrix
\begin{equation}\label{block}
\begin{pmatrix}
 \frac{[\widetilde{R}(\lambda_m)]_{mm}}{1+ym_3\left(\lambda_m\right)a_m}&0\\[2mm]
 0&\frac{[\widetilde{R}(\lambda_{m^{'}})]_{m^{'}m^{'}}}{1+ym_3\left(\lambda_{m^{'}}\right)a_{m^{'}}}\\
 \end{pmatrix}~.
\end{equation}
Here, this random matrix \eqref{block} has two diagonal blocks with dimension $n_m$ and $n_{m^{'}}$, respectively. The covariance function of the elements within each block has been fully identified by \cite{BaiYao08}, see Proposition \ref{ori}. But if we consider them as a whole, there's still need to explore the covariance between the elements from the different two blocks $[\widetilde{R}(\lambda_m)]_{mm}$ and $[\widetilde{R}(\lambda_{m^{'}})]_{m^{'}m^{'}}$.

We establish such a covariance function in Theorem \ref{joint}  when the observation vector $x$ is real with the help of our Corollary \ref{real}. However, it can also be generalized to the complex case by considering the real and imaginary parts as two independent real random variables with the help of our Theorem \ref{sesquilinear}, readers who are interested in this can  refer to \cite{BaiYao08} (see the proof of their Proposition 3.2).

\subsubsection{Main result}
\begin{theorem}\label{joint}
Assume that the variables $\xi$ and $\eta$ are real, then the two diagonal blocks of the $2M\times 2M$ random matrix
\begin{equation}\label{ourblock}
\left(
\begin{array}{cc}
 R(\lambda_m)&0\\
 0&R(\lambda_{m^{'}})\\
 \end{array}
\right)~
\end{equation}
are symmetric, having zero-mean Gaussian entries, with the following covariance function between each other: for $1\leq i\leq j \leq M$ and $1\leq i^{'}\leq j^{'} \leq M$, we have
\begin{align}
&~~~~\cov~(R(\lambda_m)(i, j), R(\lambda_{m^{'}})(i^{'}, j^{'}))\nonumber\\
&=w(m,m^{'})\Big\{\E[\xi(i)\xi(j)\xi(i^{'})\xi(j^{'})]-\Sigma_{ij}\Sigma_{i^{'}j^{'}}\Big\}\nonumber\\
&~~+\big(\theta(m,m^{'})-w(m,m^{'})\big)\Sigma_{ij^{'}}\Sigma_{i^{'}j}\nonumber\\
&~~+\big(\theta(m,m^{'})-w(m,m^{'})\big)\Sigma_{ii^{'}}\Sigma_{jj^{'}}~,\label{12}
\end{align}
where
\[
\theta(m,m^{'})=1+ym_1(\lambda_m)+ym_1(\lambda_{m^{'}})+y\Bigg(\frac{\lambda_{m^{'}}}{\lambda_m-\lambda_{m^{'}}}m_1(\lambda_{m^{'}})+\frac{\lambda_m}{\lambda_{m^{'}}-\lambda_m}m_1(\lambda_m)\Bigg)~,
\]
\[
w(m,m^{'})=1+ym_1(\lambda_m)+ym_1(\lambda_{m^{'}})+\frac{y^2(1+m_1(\lambda_m))(1+m_1(\lambda_{m^{'}}))}{(\lambda_m-y(1+m_1(\lambda_m)))(\lambda_{m^{'}}-y(1+m_1(\lambda_{m^{'}})))}~.
\]
\end{theorem}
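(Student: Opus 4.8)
The plan is to recognise the entries of $R_n(\lambda_m)$ and $R_n(\lambda_{m'})$ as quadratic forms in the rows of $\xi_{1:n}$, apply Corollary~\ref{real} conditionally on the $\eta$--block, and then identify the limiting trace parameters; this follows the same scheme as the proof of Proposition~3.2 in \cite{BaiYao08}. Writing $X(i)=(\xi_1(i),\dots,\xi_n(i))^T$ for $1\le i\le M$ (jointly i.i.d.\ across the $n$ columns with covariance $\Sigma$, so $\rho(i)=\Sigma_{ii}$), I would first condition on $\mathcal F:=\sigma(\eta_1,\dots,\eta_n)$. Then $C_n^{(1)}:=I+A_n(\lambda_m)$ and $C_n^{(2)}:=I+A_n(\lambda_{m'})$ are deterministic real symmetric $n\times n$ matrices and, for the diagonal entries, $R_n(\lambda_m)(i,i)=n^{-1/2}[X(i)^TC_n^{(1)}X(i)-\Sigma_{ii}\tr C_n^{(1)}]$, while off-diagonal entries are recovered by polarisation, $R_n(\lambda_m)(i,j)=\tfrac12[\widetilde R_{ij}-R_n(\lambda_m)(i,i)-R_n(\lambda_m)(j,j)]$ with $\widetilde R_{ij}$ the same type of form built from $X(i)+X(j)$ (again i.i.d.\ across columns, jointly with the others); the analogous identities hold for $C_n^{(2)}$. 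Hence, conditionally on $\mathcal F$, the vector of all entries of $R_n(\lambda_m)$ and $R_n(\lambda_{m'})$ is a fixed linear image of a vector of quadratic forms to which Corollary~\ref{real} applies with $A_n\leftarrow C_n^{(1)}$, $B_n\leftarrow C_n^{(2)}$; its conditional limit is therefore a centred Gaussian vector whose within-block covariances are those of Proposition~\ref{ori}.

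It then remains to compute the between-block covariances, which by Corollary~\ref{real} are controlled by the three (a.s.\ existing) limits $w_{12}=\limm\tfrac1n\tr[C_n^{(1)}\circ C_n^{(2)}]$, $\theta_{12}=\limm\tfrac1n\tr[C_n^{(1)}(C_n^{(2)})^*]$ and $\tau_{12}=\limm\tfrac1n\tr[C_n^{(1)}C_n^{(2)}]$; by symmetry $\tau_{12}=\theta_{12}$, and $\lambda_m\neq\lambda_{m'}$ since $\phi$ is strictly increasing on each of $(0,1-\sqrt y)$ and $(1+\sqrt y,\infty)$ with the spikes distinct, both $\lambda_m,\lambda_{m'}$ lying outside $[a_y,b_y]$. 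For $\theta_{12}$ I would use the identity $A_n(\lambda)=(\lambda I_n-\underline S_n)^{-1}\underline S_n$ with $\underline S_n:=X_2^*X_2$ (whose e.s.d.\ tends a.s.\ to $(1-y)\delta_0+yF_y$), so that $A_n(\lambda_m)A_n(\lambda_{m'})=\underline S_n^2(\lambda_mI-\underline S_n)^{-1}(\lambda_{m'}I-\underline S_n)^{-1}$ and the $n-p$ null eigenvalues contribute nothing; expanding $C_n^{(1)}C_n^{(2)}=I+A_n(\lambda_m)+A_n(\lambda_{m'})+A_n(\lambda_m)A_n(\lambda_{m'})$ and using $\tfrac1n\tr A_n(\lambda)\to ym_1(\lambda)$ and $\tfrac1n\tr[A_n(\lambda_m)A_n(\lambda_{m'})]\to y\int\frac{x^2}{(\lambda_m-x)(\lambda_{m'}-x)}F_y(dx)$, a partial-fraction decomposition of $\frac{x^2}{(\lambda_m-x)(\lambda_{m'}-x)}$ together with $\lambda m_0(\lambda)=1+m_1(\lambda)$ yields $\theta_{12}=\tau_{12}=\theta(m,m')$. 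For $w_{12}=\limm\tfrac1n\sum_u(1+[A_n(\lambda_m)]_{uu})(1+[A_n(\lambda_{m'})]_{uu})$ I would invoke the Sherman--Morrison (leave-one-out) analysis of the diagonal entries $[A_n(\lambda)]_{uu}=\tfrac1n\eta_u^*(\lambda I_p-X_2X_2^*)^{-1}\eta_u$ carried out in \cite{BaiYao08}, which gives $\max_u|[A_n(\lambda)]_{uu}-h(\lambda)|\to0$ a.s.\ with $h(\lambda):=\frac{y(1+m_1(\lambda))}{\lambda-y(1+m_1(\lambda))}$; comparing the two ways of evaluating $\limm\tfrac1n\tr A_n(\lambda)$ also yields the Mar\v{c}enko--Pastur identity $h(\lambda)=ym_1(\lambda)$ (equivalently $\lambda m_1=(1+m_1)(1+ym_1)$), so that $w_{12}=(1+h(\lambda_m))(1+h(\lambda_{m'}))=w(m,m')$.

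To finish, I would remove the conditioning: given $\mathcal F$, the conditional characteristic function of the vector of forms converges a.s.\ to $\exp(-\tfrac12 t^TB_nt)$, where $B_n$ is assembled from the finite-$n$ versions of $w_{12},\theta_{12},\tau_{12}$ and of the parameters in Proposition~\ref{ori}; since $B_n\to B$ a.s., dominated convergence applied to $\E[e^{it^T(\cdot)}]=\E\big[\E[e^{it^T(\cdot)}\mid\mathcal F]\big]$ gives the unconditional centred Gaussian limit with covariance $B$. Reading off the between-block entry covariance — in the real case with $X=Y=\xi$ the quantities $A_1,A_2,A_3$ of Theorem~\ref{sesquilinear} specialise to $\E[\xi(i)\xi(j)\xi(i')\xi(j')]-\Sigma_{ij}\Sigma_{i'j'}$, $\Sigma_{ii'}\Sigma_{jj'}$ and $\Sigma_{ij'}\Sigma_{i'j}$ — with $w_{12}=w(m,m')$ and $\tau_{12}=\theta_{12}=\theta(m,m')$ produces exactly formula \eqref{12}.

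I expect the main obstacle to be twofold. The first part is bookkeeping: upgrading Corollary~\ref{real}, which is phrased for the diagonal pairings $X(l)^*A_nY(l)$, to a statement about all matrix entries $R_n(\lambda)(i,j)$ via polarisation, organised so that the $A_1,A_2,A_3$ contributions recombine into the symmetric expression in \eqref{12}. The second — and the only genuinely analytic — ingredient is the uniform self-averaging of the diagonal entries of $A_n(\lambda)$ to $h(\lambda)$ (and the companion a.s.\ convergence of the relevant traces), which I would import from \cite{BaiYao08} rather than reprove. By contrast, the genuinely new content, namely the cross-traces $\tfrac1n\tr[A_n(\lambda_m)A_n(\lambda_{m'})]$ and $\tfrac1n\sum_u[A_n(\lambda_m)]_{uu}[A_n(\lambda_{m'})]_{uu}$, then reduces to elementary Mar\v{c}enko--Pastur calculus once those inputs are in place.
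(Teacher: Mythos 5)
Your proposal is correct and its analytic core coincides with the paper's proof: condition on the $\eta$-block, apply Corollary~\ref{real} with $A_n=I+A_n(\lambda_m)$ and $B_n=I+A_n(\lambda_{m'})$, and evaluate the three cross-trace limits by Mar\v{c}enko--Pastur calculus. Your values check out — $w_{12}=(1+h(\lambda_m))(1+h(\lambda_{m'}))$ with $h=ym_1$ expands to the paper's $w(m,m')$, and the partial-fraction identity $\frac{x^2}{(\lambda_m-x)(\lambda_{m'}-x)}=\frac{\lambda_{m'}}{\lambda_m-\lambda_{m'}}\cdot\frac{x}{\lambda_{m'}-x}+\frac{\lambda_m}{\lambda_{m'}-\lambda_m}\cdot\frac{x}{\lambda_m-x}$ gives $\theta_{12}=\tau_{12}=\theta(m,m')$ exactly as in the paper (which cites Lemma~6.1 of \cite{BaiYao08} for this). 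The one genuine divergence is your polarisation step, and it is an avoidable detour: Corollary~\ref{real} is already stated for forms $X(l)^{*}A_nY(l)$ with $X(l)\neq Y(l)$, so the paper simply takes $X(l)=u(i)^{T}$, $Y(l)=u(j)^{T}$ for each pair $l=(i,j)$ (hence $K=M(M+1)/2$ and $\rho(l)=\Sigma_{ij}$); then $R_n(\lambda_m)(i,j)$ \emph{is} one of the forms $U(l)$, and \eqref{12} is read off directly from $w_{12}A_1+(\tau_{12}-w_{12})A_2+(\theta_{12}-w_{12})A_3$ with $A_1=\E[\xi(i)\xi(j)\xi(i')\xi(j')]-\Sigma_{ij}\Sigma_{i'j'}$, $A_2=\Sigma_{ii'}\Sigma_{jj'}$, $A_3=\Sigma_{ij'}\Sigma_{i'j}$. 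Your route through quadratic forms in $X(i)+X(j)$ works in principle, but it obliges you to verify that the polarised covariances recombine into the symmetric expression \eqref{12} — a multilinearity check you label as bookkeeping and do not carry out, and which the bilinear formulation makes unnecessary. On the other side of the ledger, you are more explicit than the paper about the conditioning on $\sigma(\eta_1,\dots,\eta_n)$ and the dominated-convergence step that removes it, and about isolating the uniform self-averaging of the diagonal of $A_n(\lambda)$ as the only nontrivial analytic input imported from \cite{BaiYao08}.
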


\begin{remark}
If we restrict the index $(i,j)$ to the region $J_m \times J_m$ and $(i^{'},j^{'})$ to $J_{m^{'}}\times J_{m^{'}}$, we can get the covariance function between the two blocks of \eqref{block}. And it should be noticed that the two regions $J_m \times J_m$ and $J_{m^{'}}\times J_{m^{'}}$ do not intersect with each other.
\end{remark}

\begin{remark}
In general, the covariance of the elements from  two blocks are not independent asymptotically, that is $\cov~(R(\lambda_m)(i, j), R(\lambda_{m^{'}})(i^{'}, j^{'}))\neq 0$. Notice that  same phenomenon also exists in the Wigner case, for example, see Theorem 2.11 in \cite{AY12}.
\end{remark}

\begin{remark}\label{rem1}
If the coordinates $\{\xi(i)\}$ of $\xi$ are independent (thus, $\Sigma$ is diagonal and $U=I_M$), \cite{BaiYao08} has already proved that the covariance matrix within each diagonal block in \eqref{ourblock} is diagonal; in other words, the Gaussian matrix $R(\lambda_m)$ and $R(\lambda_{m^{'}})$  are both made with independent entries. And by noting that the regions $J_m \times J_m$ and $J_{m^{'}} \times J_{m^{'}}$ are disjoint, the only covariance function that may exist between the two blocks is $\cov(R(\lambda_m)(i,i),R(\lambda_{m^{'}})(i^{'},i^{'}))$ ($i \in J_m,~i^{'} \in J_{m^{'}}$). Using \eqref{12} and the fact that $\{\xi(i)\}$ are independent, we have
\begin{align*}
&~~~~\cov~(R(\lambda_m)(i, i), R(\lambda_{m^{'}})(i^{'}, i^{'}))\\
&=w(m,m^{'})\Big\{\E[\xi(i)^2\xi(i^{'})^2]-\Sigma_{ii}\Sigma_{i^{'}i^{'}}\Big\}
+2\big(\theta(m,m^{'})-w(m,m^{'})\big)\big(\Sigma_{ii^{'}}\big)^2\\
&=w(m,m^{'})\Big\{\Sigma_{ii}\Sigma_{i^{'}i^{'}}-\Sigma_{ii}\Sigma_{i^{'}i^{'}}\Big\}
+2\big(\theta(m,m^{'})-w(m,m^{'})\big)\big(\Sigma_{ii^{'}}\big)^2\\
&=0~,
\end{align*}
which means that the two diagonal blocks in \eqref{block} are independent.
Besides, \cite{BaiYao08} have already pointed out the  variances within each block:
\begin{eqnarray}
&&Var (R(i,j))=\theta \Sigma_{ii}\Sigma_{jj}, ~~~~i<j\label{vij} \\
&&Var (R(i,i))=w(E \xi(i)^4-3\Sigma_{ii}^2)+2\theta\Sigma_{ii}^2 \label{vii}~.
\end{eqnarray}
Therefore, if $\{\xi(i)\}$ are independent, then any two groups of packed extreme sample eigenvalues $\{\sqrt n(l_{i}-\lambda_m),i \in J_m\}$ and $\{\sqrt n(l_{i^{'}}-\lambda_{m^{'}}),i^{'} \in J_{m^{'}}\}$ are asymptotically independent, converging to the eigenvalues of the Gaussian random matrices   $\frac{1}{1+ym_3(\lambda_m)a_m}[R(\lambda_m)]_{mm}$ and $\frac{1}{1+ym_3(\lambda_{m^{'}})a_{m^{'}}}[R(\lambda_{m^{'}})]_{m^{'}m^{'}}$, respectively. And both the Gaussian random matrices are made with independent entries, with a fully identified variance  function  given by \eqref{vij} and \eqref{vii}.
Moveover, if the observations are Gaussian, \eqref{vii} reduces to $Var (R(i,i))=2\theta\Sigma_{ii}^2$.

\end{remark}

\subsubsection{Conditions that two groups of packed extreme sample eigenvalues are pairwise independent}
An interesting question in the asymptotical analysis of spiked eigenvalues is to know whether two groups of packed extreme sample eigenvalues are asymptotically pairwise independent. In Remark \ref{rem1}, we have seen that when  $\{\xi(i)\}$ are independent, $\{\sqrt n(l_{i}-\lambda_m),i \in J_m\}$ and $\{\sqrt n(l_{i^{'}}-\lambda_{m^{'}}),i^{'} \in J_{m^{'}}\}$ are asymptotically  independent.

We  aim to relax the independent restriction of $\{\xi(i)\}$ under the condition that all the eigenvalues of $\Sigma$ are simple, that is, $\Sigma$ has the spectral decomposition:
\begin{equation*}
\Sigma=U\left(
\begin{array}{cccc}
 a_1 & 0 &\cdots &0\\
 0 & a_2 &\cdots &0\\
 \vdots & \vdots & \vdots & \vdots \\
 0&0&\cdots &a_M
 \end{array}
\right)U^{*}~,
\end{equation*}
where the $a_i's$ are arranged in decreasing order. We discuss the condition that when the extreme sample eigenvalues are pairwise independent, asymptotically.

Let $l_{i}$, $l_{j}$ denote the extreme sample eigenvalues correspond to two different spikes $a_i$ and $a_j$, where $a_i,a_j \notin [1-\sqrt y,1+\sqrt y]$.
Then, the two-dimensional random vector
 \begin{equation*}
 \begin{pmatrix}
   \delta_{n,i} \\[1mm]
   \delta_{n,j} \\
 \end{pmatrix}=
 \begin{pmatrix}
 \sqrt{n}\big(l_{i}-\lambda_{i}\big)\\[1mm]
 \sqrt{n}\big(l_{j}-\lambda_{j}\big)\\
 \end{pmatrix}
\end{equation*}
converges to the eigenvalues of the following random matrix:
\begin{equation*}
 \begin{pmatrix}
 \frac{1}{1+ym_3(\lambda_{i})a_i}[\widetilde{R}(\lambda_{i})]_{ii}&0\\[1mm]
 0&\frac{1}{1+ym_3(\lambda_{j})a_j}[\widetilde{R}(\lambda_{j})]_{jj}\\
 \end{pmatrix}~.
\end{equation*}
Since all the eigenvalues of $\Sigma$ are simple, the multiplicity numbers $n_i$ and $n_j$ both equal to $1$. Therefore, $[\widetilde{R}(\lambda_{i})]_{ii}$ and $[\widetilde{R}(\lambda_{j})]_{jj}$ are now two Gaussian random variables (actually, they are the $(i,i)$-th and $(j,j)$-th elements of the $M \times M$ Gaussian random matrices $\widetilde{R}(\lambda_{i})$ and $\widetilde{R}(\lambda_{j})$, respectively, denoted as $\widetilde{R}(\lambda_{i})(i,i)$ and $\widetilde{R}(\lambda_{j})(j,j)$).
As a result,
\[
\begin{pmatrix}
  \delta_{n,i} & \delta_{n,j} \\
\end{pmatrix}^T
\]
actually converges to the Gaussian random vector
 \begin{equation*}
\left(
\begin{array}{c}
 \frac{1}{1+ym_3(\lambda_{i})a_i}\widetilde{R}(\lambda_{i})(i,i)\\[1mm]
 \frac{1}{1+ym_3(\lambda_{j})a_j}\widetilde{R}(\lambda_{j})(j,j)\\
 \end{array}
\right)
\end{equation*}
with
\begin{align}
&Var~\big(R(\lambda_{i})(i,i)\big)=w(i)\big\{\E\big[\xi(i)^4\big]-\Sigma_{ii}^2\big\}+2\big(\theta(i)-w(i)\big)\Sigma_{ii}^2~,\label{va1}\\
&Var~\big(R(\lambda_{j})(j,j)\big)=w(j)\big\{\E\big[\xi(j)^4\big]-\Sigma_{jj}^2\big\}+2\big(\theta(j)-w(j)\big)\Sigma_{jj}^2~,\label{va2}\\
&\cov~\big(R(\lambda_{i})(i,i), R(\lambda_{j})(j,j)\big)=w(i,j)\big\{\E\big[\xi(i)^2\xi(j)^2\big]-\Sigma_{ii}\Sigma_{jj}\big\} \nonumber\\
&~~~~~~~~\quad\quad ~~~~~~~~\quad\quad ~~~~~~~~~~~+2\big(\theta(i,j)-w(i,j)\big)\Sigma_{ij}^2~,\label{co}
\end{align}
where $$\theta(i)=1+2ym_1(\lambda_i)+ym_2(\lambda_i)~,$$ $$w(i)=1+2ym_1(\lambda_i)+\left(\frac{y(1+m_1(\lambda_i))}{\lambda_i-y(1+m_1(\lambda_i))}\right)^2$$ are given in \cite{BaiYao08}.
From the definitions of $w(i,j)$ and $\theta(i,j)$ in Theorem \ref{joint}, taking the fact that $m_1(\lambda_{i})=1/(a_i-1)$ (see Lemma \ref{value}) into consideration, we have,
\begin{eqnarray*}
w(i,j)&=&1+ym_1(\lambda_{i})+ym_1(\lambda_{j})\\
&&+\frac{y^2\big(1+m_1(\lambda_{i})\big)\big(1+m_1(\lambda_{j})\big)}{\big(\lambda_{i}-y(1+m_1(\lambda_{i}))\big)\big(\lambda_{j}-y(1+m_1(\lambda_{j}))\big)}\\
&=&1+\frac{y}{a_i-1}+\frac{y}{a_j-1}+\frac{y^2}{(a_i-1)(a_j-1)}\\
&=&\frac{(y+a_i-1)(y+a_j-1)}{(a_i-1)(a_j-1)}~,\\
\end{eqnarray*}
\begin{eqnarray*}
\theta(i,j)-w(i,j)&=&y\Bigg(\frac{\lambda_{j}}{\lambda_{i}-\lambda_{j}}m_1(\lambda_{j})+\frac{\lambda_{i}}{\lambda_{j}-\lambda_{i}}m_1(\lambda_{i})\Bigg)\\
&&-\frac{y^2}{(a_i-1)(a_j-1)}\\
&=&y\cdot\frac{(y+a_i-1)(y+a_j-1)}{(a_i-1)(a_j-1)\big[(a_i-1)(a_j-1)-y\big]}~.\\
\end{eqnarray*}
The values of $w(i,j)$ will always be positive whenever $a_i, a_j\notin [1-\sqrt{y},1+\sqrt{y}]$, while $\theta(i,j)-w(i,j)$ will be negative if $a_i>1+\sqrt{y}$ and $0<a_j<1-\sqrt{y}$ (corresponding to  one extreme large and one extreme small sample eigenvalues), and positive if $a_i, a_j>1+\sqrt{y}$ or $0<a_i, a_j<1-\sqrt{y}$ (corresponding to  two extreme large or two extreme small sample eigenvalues).

Therefore, if any two extreme large (or small) sample eigenvalues are mutually independent (equivalent to the condition that $\cov(R(\lambda_i)(i,i),R(\lambda_j)(j,j))=0$), a sufficient and necessary condition is
\[\E\big[\xi(i)^2\xi(j)^2\big]-\Sigma_{ii}\Sigma_{jj}=0~,\] and
\[\E[\xi(i)\xi(j)]=0~(=\E\xi(i)\E\xi(j))~;\] another way of saying this is
 \begin{equation*}
(*)~~~~\left\{
\begin{array}{l}
 \cov\big(\xi(i), \xi(j)\big)=0 ~~~~(\Sigma~ is~ diagonal~or~U=I_M)~, and\\
 \cov\big(\xi(i)^2, \xi(j)^2\big)=0\\
 \end{array}
 \right.~.
\end{equation*}
Obviously, when $\{\xi(i)\}$ are independent, the condition $(*)$ is satisfied.

We consider  a special case that the observations are Gaussian, with a diagonal population covariance matrix. This model satisfies condition $(*)$. It is due to the fact that when the observations are Gaussian, uncorrelation between $\xi(i)$ and $\xi(j)$ implies independence, which further implies $\xi(i)^2$ and $\xi(j)^2$ are uncorrelated. Therefore, if the observations are Gaussian and the population covariance matrix is diagonal, then any two extreme large (or small) sample eigenvalues are mutually independent. Furthermore, we can derive explicitly the joint distribution of $\delta_{n,i}$ and $\delta_{n,j}$.
According to \eqref{va1}, \eqref{va2} and \eqref{co}, we have a much more simplified form due to the Gaussian assumption:
\begin{eqnarray}
&&Var~\big(R(\lambda_{i})(i,i)\big)=2\theta(i)a_i^2~,\nonumber\\
&&Var~\big(R(\lambda_{j})(j,j)\big)=2\theta(j)a_j^2~,\nonumber\\
&&\cov~\big(R(\lambda_{i})(i,i), R(\lambda_{j})(j,j)\big)=0\label{gauco}~,
\end{eqnarray}
where $\theta(i)=\frac{(a_i-1+y)^2}{(a_i-1)^2-y}$ and $\theta(j)=\frac{(a_j-1+y)^2}{(a_j-1)^2-y}$ by definition.
And using the expression $m_3(\lambda)=\frac{1}{(a-1)^2-y}$ (see Lemma \ref{value}), we finally derive the asymptotic joint distribution:
\[
\begin{pmatrix}
 \sqrt{n}\big(l_{i}-\lambda_{i}\big)\\[2mm]
 \sqrt{n}\big(l_{j}-\lambda_{j}\big)\\
\end{pmatrix}
 \Longrightarrow
\mathcal{N}\left(
        \begin{pmatrix}
          0 \\[2mm]
          0 \\
        \end{pmatrix},
        \begin{pmatrix}
          \frac{2a_i^2[(a_i-1)^2-y]}{(a_i-1)^2} & 0 \\
          0 & \frac{2a_j^2[(a_j-1)^2-y]}{(a_j-1)^2} \\
        \end{pmatrix}
\right)~.
\]

But, if we only assume $\Sigma$ is diagonal, and no Gaussian assumptions are made, things are different.  One such example is that $\xi(i)$ and $\xi(j)$ come from the uniform distribution inside the ellipse:
\begin{eqnarray*}
\frac{\xi(i)^2}{16}+\frac{\xi(j)^2}{36}\leq 1 ,
\end{eqnarray*}
one can check that $\E\xi(i)\xi(j)=\E\xi(i)\cdot \E\xi(j)=0$, but $\E\xi(i)^2=4$, $\E\xi(j)^2=9$ and $\E\xi(i)^2\xi(j)^2=24$, that is $\E\xi(i)^2\xi(j)^2\neq \E\xi(i)^2\cdot \E\xi(j)^2$, therefore, condition $(*)$ is not satisfied. From this example, we see there could happen that although $\xi(i)$ and $\xi(j)$ are uncorrelated, $\xi(i)^2$ and $\xi(j)^2$ are correlated. And in such a case, even though the population covariance matrix is diagonal, the two extreme large (or small) eigenvalues of the sample covariance matrix may actually have correlation between each other.

A small simulation is conducted below to check this covariance formula according to the two cases mentioned above. The dimension $p$ is fixed to be $200$ and the sample size $n$ is fixed to be $300$. We choose two spikes $a_1=9$ and $a_2=4$, which are both larger than the critical value $1+\sqrt{y}$ ($=1+\sqrt{2/3}$). We repeat $10000$ times to calculate the empirical covariance value between the largest ($l_{1}$) and the second largest ($l_{2}$) sample eigenvalues. The first case is the two-dimensional multivariate Gaussian vector $(\xi(1),\xi(2))^{T}$,
which has a joint distribution
\[
\mathcal{N}\left(
        \begin{pmatrix}
          0 \\
          0 \\
        \end{pmatrix},
      \begin{pmatrix}
          9 & 0 \\
          0 & 4 \\
        \end{pmatrix}
\right)~.
\]
According to \eqref{gauco}, the theoretical covariance value between $l_{1}$ and $l_{2}$ should be 0, and the empirical covariance value from the $10000$ sample simulated turns out to be $0.0019$.
The second case is the aforementioned  uniform distribution  inside the ellipse:
$\xi(1)^2/36+\xi(2)^2/16\leq 1$. This time, the theoretical covariance value between $l_{1}$ and $l_{2}$ could  be calculated as $-0.0366$ according to \eqref{co}, and the empirical covariance value from the $10000$ sample simulated turns out to be $-0.0371$.  The two errors are both smaller than the order $O(1/\sqrt{10000})$ under both cases.

\subsection{Application 2: Asymptotic joint distribution of the largest sample eigenvalue and its corresponding sample eigenvector projection}\label{application2}
In this subsection, we consider the joint central limit theorem of extreme sample eigenvalue and its corresponding sample eigenvector projection, which may find  applications in principal component scores, where both the eigenvalue and its eigenvector are involved, see \cite{S12}.

Let the population covariance matrix  be diagonal with $k$ simple spikes:
\begin{equation*}\label{model2}
 V=\diag(\underbrace{a_1, \cdots, a_k}_k, \underbrace{1, \cdots, 1}_{p})~,
\end{equation*}
where now the $\Sigma$ in \eqref{v} reduces to a diagonal matrix $\diag(a_1, \cdots, a_k)$ with all the diagonal elements $a_i$ larger than the critical value $1+\sqrt y$. The sample covariance matrix $S_n$ is also partitioned as before:
\[
S_n=\begin{pmatrix}
      S_{11} & S_{12} \\
      S_{21} & S_{22} \\
    \end{pmatrix}=\begin{pmatrix}
                    X_1X_1^{*} & X_1X_2^{*} \\
                    X_2X_1^{*} & X_2X_2^{*} \\
                  \end{pmatrix}=\frac1n\begin{pmatrix}
                                         \sum \xi_{i}\xi_{i}^{*} & \sum \xi_{i}\eta_{i}^{*} \\
                                         \sum \eta_{i}\xi_{i}^{*} & \sum \eta_{i}\eta_{i}^{*} \\
                                       \end{pmatrix}~,
                                       \]
with
\[
X_1=\frac1{\sqrt{n}}(\xi_1, \cdots, \xi_n)_{k \times n}:=\frac1{\sqrt{n}} \xi_{1:n}~,
\]
\[
X_2=\frac1{\sqrt{n}}(\eta_1, \cdots, \eta_n)_{p \times n}:=\frac1{\sqrt{n}} \eta_{1:n}~,
\]
which are mutually independent. And we denote $\nu_4(i)=\E \xi(i)^4/a_i^2-3$ for $i=1, \cdots, k$ as the kurtosis coefficient of the $i$-th coordinate of $\xi$.

Now suppose $l_i$ is an extreme eigenvalue of $S_n$, converging to the value $\lambda_i=\phi(a_i)=a_i+ya_i/(a_i-1)$ and let $(u_i,v_i)^T$
be the corresponding sample eigenvector with $u_i$ its first $k$ components  and $v_i$ the remaining $p$ components. We derive the following central limit theorem that establishes the asymptotic joint distribution of  the extreme sample eigenvalue $l_i$ and its corresponding sample eigenvector projection $u_i(i)^2$ (here $u_i(i)$ stands for the $i$-th element of the $k \times 1$ vector $u_i$). Notice that the population eigenvector corresponding to the spike $a_i$ is simply $e_i=(0, \cdots, 1, \cdots, 0)^{T}$, the $i$-th standard canonical basis vector. Therefore, $u_i(i)$ represents the inner product between the sample eigenvector $(u_i,v_i)^T$ and the population one $e_i$.

\begin{theorem}\label{lu}
\begin{align*}
\begin{pmatrix}
  \sqrt n\left(u_i(i)^{2}-\frac{(a_i-1)^2-y}{(a_i-1)(a_i-1+y)}\right) \\[2mm]
  \sqrt n (l_i-\lambda_i) \\
\end{pmatrix}\Longrightarrow \N \left(\begin{pmatrix}
                                        0 \\[2mm]
                                        0 \\
                                      \end{pmatrix},\begin{pmatrix}
                                                      v_{11} & v_{12} \\[2mm]
                                                      v_{12} & v_{22} \\
                                                    \end{pmatrix}\right)~,
\end{align*}
where
\begin{align*}
&v_{11}=\frac{a_i^2y^2(a_i^2+y-1)^2}{(a_i-1)^4(a_i-1+y)^4}\nu_4(i)+\frac{2a_i^2y((a_i+y-1)^2+ya_i^2)}{((a_i-1)^2-y)(a_i-1+y)^4}~,\\
&v_{12}=\frac{ya_i^2(a_i^2-1+y)((a_i-1)^2-y)}{(a_i-1)^6(a_i-1+y)^4}\nu_4(i)+\frac{2a_i^3y}{(a_i-1)(a_i-1+y)^2}~,\\
&v_{22}=\frac{a_i^2((a_i-1)^2-y)^2}{(a_i-1)^4}\nu_4(i)+\frac{2a_i^2((a_i-1)^2-y)}{(a_i-1)^2}~.
\end{align*}
\end{theorem}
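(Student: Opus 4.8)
The plan is to turn the statement into a first-order perturbation problem for a $k\times k$ matrix and then read off the two fluctuations from Corollary \ref{real}. The two block equations coming from $S_n(u_i,v_i)^{T}=l_i(u_i,v_i)^{T}$ give, since a.s.\ $l_i\notin[a_y,b_y]$ is not an eigenvalue of $S_{22}$, $v_i=(l_iI_p-S_{22})^{-1}S_{21}u_i$, and hence that $u_i$ is an eigenvector of $K_n(l_i):=S_{11}+S_{12}(l_iI_p-S_{22})^{-1}S_{21}$ with eigenvalue $l_i$. With $X_1=n^{-1/2}\xi_{1:n}$, $X_2=n^{-1/2}\eta_{1:n}$ and the push-through identity, $K_n(\lambda)=\tfrac1n\xi_{1:n}(I_n+A_n(\lambda))\xi_{1:n}^{*}=n^{-1/2}R_n(\lambda)+\rho_n(\lambda)\Sigma$ with $\rho_n(\lambda)=\tfrac1n\tr(I_n+A_n(\lambda))\to 1+ym_1(\lambda)$, and $I_n+A_n(\lambda)=\lambda(\lambda I_n-X_2^{*}X_2)^{-1}$. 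Since $\Sigma=\diag(a_1,\dots,a_k)$ is diagonal, $K_n(l_i)$ is an $O_P(n^{-1/2})$ perturbation of the diagonal matrix $\rho_n(l_i)\Sigma$; the spike $a_i$ captures $l_i$ through $\lambda_i=\rho(\lambda_i)a_i$, so $u_i(j)=O_P(n^{-1/2})$ for $j\neq i$ and $u_i(i)^2$ converges to a constant $c_i$.

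Next I linearise around $(\lambda_i,e_i)$. The constant $c_i$ and the centering are fixed by the normalisation $\|u_i\|^2+\|v_i\|^2=1$. Writing $\|v_i\|^2=u_i^{*}S_{12}(l_iI_p-S_{22})^{-2}S_{21}u_i$ and using $X_2^{*}(\lambda I_p-X_2X_2^{*})^{-2}X_2=-A_n'(\lambda)$, one has $\|v_i\|^2=-n^{-1/2}u_i^{*}R_n'(l_i)u_i-\tfrac1n\tr A_n'(l_i)\cdot u_i^{*}\Sigma u_i$. Substituting $\|u_i\|^2=u_i(i)^2+O_P(1/n)$, $u_i^{*}\Sigma u_i=a_iu_i(i)^2+O_P(1/n)$, $u_i^{*}R_n'(l_i)u_i=c_i[R_n'(\lambda_i)]_{ii}+o_P(1)$ and $\tfrac1n\tr A_n'(l_i)=-ym_3(\lambda_i)+2ym_5(\lambda_i)(l_i-\lambda_i)+o_P(n^{-1/2})$, and solving for $u_i(i)^2$, one obtains $c_i=(1+ym_3(\lambda_i)a_i)^{-1}=\tfrac{(a_i-1)^2-y}{(a_i-1)(a_i-1+y)}$ and, after a Taylor expansion,
\[
\sqrt n\,\big(u_i(i)^2-c_i\big)=c_i^{2}\,[R_n'(\lambda_i)]_{ii}+2a_iy\,m_5(\lambda_i)\,c_i^{2}\,\delta_{n,i}+o_P(1),\qquad \delta_{n,i}:=\sqrt n\,(l_i-\lambda_i).
\]
From \eqref{solution} specialised to a simple spike ($n_i=1$, $U=I_M$), $\delta_{n,i}=c_i[R_n(\lambda_i)]_{ii}+o_P(1)$. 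Hence both coordinates of the vector in the theorem are, modulo $o_P(1)$, fixed linear combinations of the two scalars $[R_n(\lambda_i)]_{ii}$ and $[R_n'(\lambda_i)]_{ii}$.

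It remains to find the joint Gaussian limit of $\big([R_n(\lambda_i)]_{ii},[R_n'(\lambda_i)]_{ii}\big)$ and its covariance. Both are quadratic forms in $\xi(i)=(\xi_1(i),\dots,\xi_n(i))^{T}$: $[R_n(\lambda_i)]_{ii}=n^{-1/2}\big(\xi(i)^{*}A_n\xi(i)-\rho(i)\tr A_n\big)$ with $A_n=I_n+A_n(\lambda_i)$, and $[R_n'(\lambda_i)]_{ii}=n^{-1/2}\big(\xi(i)^{*}B_n\xi(i)-\rho(i)\tr B_n\big)$ with $B_n=A_n'(\lambda_i)$, $\rho(i)=\E\xi_1(i)^2=a_i$; here $A_n,B_n$ are symmetric and measurable with respect to $X_2$, hence independent of $\xi(i)$. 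Conditioning on $X_2$ and applying Corollary \ref{real} with $K=1$, $Y\equiv X$, the pair is asymptotically bivariate Gaussian with covariance built from $A_1=\E\xi(i)^4-a_i^2=a_i^2(\nu_4(i)+2)$, $A_2=A_3=a_i^2$ and the limits of the nine normalised traces of $A_n\circ A_n$, $B_n\circ B_n$, $A_n\circ B_n$, $A_nA_n^{*}$, $B_nB_n^{*}$, $A_nB_n^{*}$, $A_n^2$, $B_n^2$, $A_nB_n$. Using $I_n+A_n(\lambda)=\lambda(\lambda I_n-X_2^{*}X_2)^{-1}$ and $A_n'(\lambda)=(\lambda I_n-X_2^{*}X_2)^{-1}-\lambda(\lambda I_n-X_2^{*}X_2)^{-2}$, these reduce to the Mar\v{c}enko--Pastur integrals $m_1,\dots,m_7$ of \eqref{mi} (the Hadamard traces additionally requiring that the diagonal entries of these $n\times n$ resolvents self-average, by a rank-one perturbation estimate); the resulting random coefficients converge a.s.\ to deterministic limits, and the usual conditioning argument upgrades the conditional CLT to an unconditional one. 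Finally, pushing this covariance through the linear map of the second paragraph and simplifying with Lemma \ref{value} ($m_1(\lambda_i)=1/(a_i-1)$, $m_3(\lambda_i)=1/((a_i-1)^2-y)$ and the analogous closed forms for $m_2,m_5,m_6,m_7$) together with $\lambda_i=a_i(a_i-1+y)/(a_i-1)$ yields $v_{11},v_{12},v_{22}$; as a check, the $v_{22}$ entry must reproduce $c_i^2\big(w(i)a_i^2\nu_4(i)+2\theta(i)a_i^2\big)$ with $w(i)=(a_i-1+y)^2/(a_i-1)^2$ and $\theta(i)=(a_i-1+y)^2/((a_i-1)^2-y)$ from Proposition \ref{ori}, which it does.

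I expect the main obstacle to be the linearisation in the second paragraph: one must track carefully the effect of the joint normalisation $\|u_i\|^2+\|v_i\|^2=1$, show that the off-diagonal eigenvector components as well as the errors from replacing $l_i$ by $\lambda_i$ inside $R_n$, $R_n'$ and the traces are all $o_P(n^{-1/2})$, and isolate exactly the two surviving linear functionals $[R_n(\lambda_i)]_{ii}$ and $[R_n'(\lambda_i)]_{ii}$. The remainder — the nine trace evaluations and the algebraic reduction of the $2\times2$ covariance — is routine but lengthy.
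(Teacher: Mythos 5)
Your proposal is correct and follows essentially the same route as the paper's proof: the eigenvector relations and the normalisation $\|u_i\|^2+\|v_i\|^2=1$ are used to linearise $u_i(i)^2$ around $c_i=(1+a_iym_3(\lambda_i))^{-1}$ as a combination of the two quadratic forms $\frac{1}{\sqrt n}\xi_{1:n}[A-\E A]\xi_{1:n}^{*}(i,i)$ and $\frac{1}{\sqrt n}\xi_{1:n}[B-\E B]\xi_{1:n}^{*}(i,i)$ (your $[R_n(\lambda_i)]_{ii}$ and $[R_n'(\lambda_i)]_{ii}$, since $B=-A_n'(\lambda_i)$), after which Corollary \ref{real} with $K=1$, $Y\equiv X$ gives the joint Gaussian limit exactly as in the paper's Lemma \ref{abjoint}, and the covariance is pushed through the same $2\times2$ matrix $D$. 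The only differences are cosmetic (derivative notation, an explicit conditioning-on-$X_2$ remark, and the consistency check of $v_{22}$ against Proposition \ref{ori}).
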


\begin{remark}
If the observations are Gaussian ($\nu_4(i)=0$ for $i=1, \cdots, k$), then the three values above are simplified to be:
\begin{align*}
&v_{11}=\frac{2a_i^2y((a_i+y-1)^2+ya_i^2)}{((a_i-1)^2-y)(a_i-1+y)^4}~,\\
&v_{12}=\frac{2a_i^3y}{(a_i-1)(a_i-1+y)^2}~,\\
&v_{22}=\frac{2a_i^2((a_i-1)^2-y)}{(a_i-1)^2}~.
\end{align*}
\end{remark}

\begin{remark}
Trivially, the following central limit theorem of the eigenvector projection holds
\[
\sqrt n\left(u_i(i)^{2}-\frac{(a_i-1)^2-y}{(a_i-1)(a_i-1+y)}\right)\longrightarrow \mathcal{N}(0,v_{11})~.
\]
In particular,
\[
u_i(i)^{2}\xrightarrow{p} \frac{(a_i-1)^2-y}{(a_i-1)(a_i-1+y)}~.
\]
Observe that this limit $\in (0,1)$. In particular, the sample eigenvector  does not converge to the population eigenvector; only their angle tends to a limit. Notice that the limit of the angle has already been established by \cite{Paul07} for the Gaussian case and \cite{BenaychNadakuditi11} on somewhat different but closely related random matrix models with a finite-rank perturbation.
\end{remark}

\section{Proof of Proposition \ref{propan}, Theorem \ref{joint} and \ref{lu}} \label{proof}
\subsection{Proof of Proposition \ref{propan}}\label{se4}
\begin{proof}
We give a short proof of  Proposition \ref{propan} using our Theorem \ref{ksesquilinear}.
 Let
\begin{align*}
&U^{(m)}=\frac{1}{\sqrt n} \left[s_1^T(S_1S^T_1)^m s_1-\frac 1n \tr (S_1S^T_1)^m\right]~, ~1\leq m \leq i~,\\
&U^{(i+1)}=\frac{1}{\sqrt n}\left[s_1^Ts_1-1\right]~,
\end{align*}
so we have
\begin{align*}
&k=i+1~,\\
&X(1)=s_1:=\frac{1}{\sqrt n}(v_{11}, \cdots, v_{1n})^T=Y(1)~,~(K=1)~,\\
&\rho(l)=\E x_{11}y_{11}=\frac 1n~,\\
&A_1=\frac {1}{n^2}(\nu_4-1),~ A_2=A_3=\frac{1}{n^2}~,
\end{align*}
and the matrix
$$A^{(m)}:=\left\{\begin{array}{ll}
(S_1S^T_1)^m~, & 1\leq m\leq i\\
I_n~, & m=i+1
\end{array}\right.~.$$
Then,
\begin{align*}
\tau_m=\theta_m=\limn \frac 1n \tr[A^{(m)}]^2&=\left\{\begin{array}{ll}
\limn \frac{1}{n} \tr (S_1S^T_1)^{2m}~, & 1\leq m \leq i\\
1~,& m=i+1
\end{array}\right.\\
&=\left\{\begin{array}{lll}
y^{2m}\int x^{2m}dG_y(x)~,& & 1\leq m \leq i\\
1~,& & m=i+1
\end{array}\right.~,
\end{align*}

\begin{align*}
\tau_{mm^{'}}=\theta_{mm^{'}}&=\left\{\begin{array}{ll}
\limn \frac{1}{n} \tr \left[(S_1S^T_1)^{m}(S_1S^T_1)^{m^{'}}\right]~, & 1\leq m, m^{'} \leq i~,\\[1mm]
\limn \frac{1}{n} \tr (S_1S^T_1)^{m^{'}}~,& m=i+1, 1\leq m^{'} \leq i~,
\end{array}\right.\\
&=\left\{\begin{array}{llllll}
y^{m+m^{'}}\int x^{m+m^{'}}dG_y(x)~, &&&&&1\leq m, m^{'}\leq i~,\\
y^{m^{'}}\int x^{m^{'}}dG_y(x)~, &&&&& m=i+1,1\leq m^{'}\leq i~,
\end{array}\right.
\end{align*}

\begin{align*}
w_m=\limn\frac{1}{n}\tr [A^{(m)}\circ A^{(m)}]&=\left\{\begin{array}{ll}
(\limn \frac{1}{n} \tr (S_1S^T_1)^{m})^2~, & 1\leq m \leq i~,\\[1mm]
(\limn \frac{1}{n} \tr I_n)^2~, & m=i+1~,
\end{array}\right.\\
&=\left\{\begin{array}{lll}
y^{2m}(\int x^{m}dG_y(x))^2~, & &1\leq m \leq i~,\\
1~, && m=i+1~,
\end{array}\right.
\end{align*}

\begin{align*}
w_{mm^{'}}&=\limn\frac{1}{n}\tr [A^{(m)}\circ A^{(m^{'})}]\\
&=\left\{\begin{array}{ll}
\limn \frac{1}{n} \tr (S_1S^T_1)^{m}\cdot \lim_{n\rightarrow \infty} \frac{1}{n} \tr (S_1S^T_1)^{m^{'}}~, & 1\leq m, m^{'} \leq i~,\\[1mm]
\limn \frac{1}{n} \tr (S_1S^T_1)^{m}~, & m^{'}=i+1~, 1\leq m \leq i~,
\end{array}\right.\\
&=\left\{\begin{array}{llll}
y^{m}\int x^{m}dG_y(x)\cdot y^{m^{'}}\int x^{m^{'}}dG_y(x)~, &&&1\leq m, m^{'} \leq i~,\\[1mm]
y^{m}\int x^{m}dG_y(x)~, &&& m^{'}=i+1,  1\leq m \leq i~,
\end{array}\right.
\end{align*}
Combine all that above with \eqref{bii} and \eqref{bij} leads to the result.

\end{proof}

\subsection{Proof of Theorem \ref{joint}}
\begin{proof}
We prove this result with the help of  Corollary \ref{real}. Consider
\begin{equation*}
\left(
\begin{array}{c}
 \frac{1}{\sqrt{n}}u(i)(I+A_n(\lambda_m))u(j)^{T} \\
 \frac{1}{\sqrt{n}}u(i^{'})(I+A_n(\lambda_{m^{'}}))u(j^{'})^{T}\\
 \end{array}
\right)_{1 \leq i \leq j \leq M, ~1 \leq i^{'} \leq j^{'} \leq M }~,
\end{equation*}
with $u(i)=\big(\xi_1(i), \cdots, \xi_n(i)\big)$. Moreover, we define $X(l)=u(i)^{T}, Y(l)=u(j)^{T}$, $X(l^{'})=u(i^{'})^{T}, Y(l^{'})=u(j^{'})^{T}$, with $l=(i, j)$, $l^{'}=(i^{'}, j^{'})$, where $l$ and $l^{'}$ both have $K=\frac{M(M+1)}{2}$ options. Recall the definition of $R_n$, we have:
\begin{eqnarray*}
&&R_n(\lambda_m)=\frac{1}{\sqrt{n}}\Big\{\xi_{1:n}(I+A_n(\lambda_m))\xi^{*}_{1:n}-\Sigma \tr(I+A_n(\lambda_m))\Big\}~,\\
&&R_n(\lambda_{m^{'}})=\frac{1}{\sqrt{n}}\Big\{\xi_{1:n}(I+A_n(\lambda_{m^{'}}))\xi^{*}_{1:n}-\Sigma \tr(I+A_n(\lambda_{m^{'}}))\Big\}~.
\end{eqnarray*}
By applying Corollary \ref{real}, we have
\[
\cov(R(\lambda_m)(i, j), R(\lambda_{m^{'}})(i^{'}, j^{'}))=w_3A_1+(\tau_3-w_3)A_2+(\theta_3-w_3)A_3~.
\]
We specify these values in the following:
\begin{eqnarray*}
w_3&=&\lim_{n\rightarrow \infty}\frac{1}{n}\sum_{u=1}^n(I+A_n(\lambda_m))_{uu}(I+A_n(\lambda_{m^{'}}))_{uu}\\
&=&1+\lim_{n\rightarrow \infty}\frac{1}{n}\sum_{u=1}^n\Big(A_n(\lambda_m)_{uu}+A_n(\lambda_{m^{'}})_{uu}+A_n(\lambda_m)_{uu}A_n(\lambda_{m^{'}})_{uu}\Big)\\
&=&1+ym_1(\lambda_m)+ym_1(\lambda_{m^{'}})+\frac{y^2\big(1+m_1(\lambda_m)\big)\big(1+m_1(\lambda_{m^{'}})\big)}{\big(\lambda_m-y(1+m_1(\lambda_m))\big)\big(\lambda_{m^{'}}-y(1+m_1(\lambda_{m^{'}}))\big)}~,\\
&=& w(m,m^{'})~,\\
\tau_3&=&\theta_3=\lim_{n\rightarrow \infty}\frac{1}{n}\sum_{u,v=1}^n(I+A_n(\lambda_m))_{uv}(I+A_n(\lambda_{m^{'}}))_{vu}\\
&=&\lim_{n\rightarrow \infty}\frac{1}{n}\tr\big(I+A_n(\lambda_m))(I+A_n(\lambda_{m^{'}})\big)\\
&=&1+ym_1(\lambda_m)+ym_1(\lambda_{m^{'}})+y\int\frac{x^2}{(\lambda_{m^{'}}-x)(\lambda_m-x)}F_y(dx)\\
&=&1+ym_1(\lambda_m)+ym_1(\lambda_{m^{'}})+y\Bigg(\frac{\lambda_{m^{'}}}{\lambda_m-\lambda_{m^{'}}}m_1(\lambda_{m^{'}})+\frac{\lambda_m}{\lambda_{m^{'}}-\lambda_m}m_1(\lambda_m)\Bigg)~,\\
&=& \theta(m,m^{,})~,
\end{eqnarray*}
where we have used Lemma 6.1. in \cite{BaiYao08};
and
\begin{eqnarray*}
A_1&=&\E(\overline{x}_{l1}y_{l1}\overline{x}_{l^{'}1}y_{l^{'}1})-\rho(l)\rho(l^{'})=\E[\xi(i)\xi(j)\xi(i^{'})\xi(j^{'})]-\Sigma_{ij}\Sigma_{i^{'}j^{'}}~,\\
A_2&=&\E(\overline{x}_{l1}\overline{x}_{l^{'}1})\E(y_{l1}y_{l^{'}1})=\E[\xi(i)\xi(i^{'})]\E[\xi(j)\xi(j^{'})]~,\\
A_3&=&\E(\overline{x}_{l1}y_{l^{'}1})\E(\overline{x}_{l^{'}1}y_{l1})=\E[\xi(i)\xi(j^{'})]\E[\xi(j)\xi(i^{'})]~.
\end{eqnarray*}
Combine all these, we have
\begin{align*}
&~~~~\cov(R(\lambda_m)(i, j), R(\lambda_{m^{'}})(i^{'}, j^{'})\\
&=w(m,m^{'})\Big\{\E[\xi(i)\xi(j)\xi(i^{'})\xi(j^{'})]-\Sigma_{ij}\Sigma_{i^{'}j^{'}}\Big\}\\
&~~+\big(\theta(m,m^{'})-w(m,m^{'})\big)\E[\xi(i)\xi(j^{'})]\E[\xi(i^{'})\xi(j)]\\
&~~+\big(\theta(m,m^{'})-w(m,m^{'})\big)\E[\xi(i)\xi(i^{'})]\E[\xi(j)\xi(j^{'})]~.
\end{align*}
The proof of Theorem \ref{joint} is complete.
\end{proof}

\subsection{Proof of Theorem \ref{lu}}
\begin{proof}
Since $l_i$ is the extreme eigenvalue of $S_n$ and $(u_i,v_i)^T$
its corresponding eigenvector, we have
\begin{equation*}
\begin{pmatrix}
  l_iI_k-X_1X_1^{*} & -X_1X_2^{*} \\
  -X_2X_1^{*} & l_iI_{p}-X_2X_2^{*} \\
\end{pmatrix}\begin{pmatrix}
               u_i \\
               v_i \\
             \end{pmatrix}=0~,
\end{equation*}
where $u_i$ is the first $k$ components, and $v_i$ the remaining $p$ components, and this leads to
\begin{equation*}\label{vectoreq}
\left\{
  \begin{array}{l}
    (l_iI_k-X_1X_1^{*})u_{i}-X_1X_2^{*}v_i=0 \\[2mm]
    -X_2X_1^{*}u_i+(l_iI_{p}-X_2X_2^{*})v_i=0~.
  \end{array}
\right.
\end{equation*}
Consequently,
\begin{align}
&v_i=(l_iI_{p}-X_2X_2^{*})^{-1}X_2X_1^{*}u_i~,\label{u1}\\
&(l_iI_k-X_1\left(I_n+X_2^{*}(l_iI_p-X_2X_2^{*})^{-1}X_2\right)X_1^{*})u_i=0~.\label{u2}
\end{align}
\eqref{u2} is equivalent to \begin{align}\left(l_iI_k+l_i\underline{s}(l_i)\begin{pmatrix}
                                                                     a_1 & \cdots & 0 \\
                                                                     \vdots & \ddots & \vdots \\
                                                                    0 & \cdots & a_k \\
                                                                   \end{pmatrix}+o(1)\right)u_i=0~,\label{ev}
\end{align}
where \[\underline{s}(l_i)=\int\frac{1}{l_i-x}d\underline{F}_y(x),\] $\underline{F}_y(x)$ is the LSD of $X_2^{*}X_2$. Since $\underline{s}(l_i)=-1/a_i$, we have \eqref{ev} equivalent to \[\begin{pmatrix}
                                                                                    1-\frac{a_1}{a_i} & \cdots & 0 \\
                                                                                    \vdots & \ddots & \vdots \\
                                                                                    0 & \cdots & 1-\frac{a_k}{a_i} \\
                                                                                  \end{pmatrix}\begin{pmatrix}
                                                                                                 u_i(1) \\
                                                                                                 \vdots \\
                                                                                                 u_i(k) \\
                                                                                               \end{pmatrix}=0~,
\]
and that leads to
\begin{align}
u_i(1)=\cdots =u_i(i-1)=u_i(i+1)=\cdots=u_i(k)=0~.\label{ui0}
\end{align}
Moreover, combining \eqref{u1} with the fact that
\[
\begin{pmatrix}
  u_i^{*}& v_i^{*} \\
\end{pmatrix}\begin{pmatrix}
               u_i \\
               v_i \\
             \end{pmatrix}=1
\]
leads to
\begin{equation*}\label{ralation}
u_i^{*}(I_k+X_1X_2^{*}(l_iI_{p}-X_2X_2^{*})^{-2}X_2X_1^{*})u_i=1~,
\end{equation*}
which is also equivalent to
\begin{align}
u^2_i(i)[I_k+X_1X_2^{*}(l_iI_{p}-X_2X_2^{*})^{-2}X_2X_1^{*}](i,i)=1~
\end{align}
if take \eqref{ui0} into consideration.
Therefore, we have
\begin{eqnarray*}
u^2_i(i)&=&\frac{1}{1+[X_1X_2^{*}(l_iI_{p}-X_2X_2^{*})^{-2}X_2X_1^{*}](i,i)}\\
&=&\frac{1}{1+\E [X_1X_2^{*}(\lambda_i I_{p}-X_2X_2^{*})^{-2}X_2X_1^{*}](i,i)}+C\\
&=&\frac{1}{1+a_i y m_3(\lambda_i)}+C+o(\frac{1}{\sqrt n})~,
\end{eqnarray*}
where
\begin{eqnarray*}
C&=&\frac{1}{1+[X_1X_2^{*}(l_iI_{p}-X_2X_2^{*})^{-2}X_2X_1^{*}](i,i)}-\frac{1}{1+\E [X_1X_2^{*}(\lambda_i I_{p}-X_2X_2^{*})^{-2}X_2X_1^{*}](i,i)}\\
&=&\frac{1}{1+[X_1X_2^{*}(l_iI_{p}-X_2X_2^{*})^{-2}X_2X_1^{*}](i,i)}-\frac{1}{1+[X_1X_2^{*}(\lambda_i I_{p}-X_2X_2^{*})^{-2}X_2X_1^{*}](i,i)}\\
&&+\frac{1}{1+[X_1X_2^{*}(\lambda_i I_{p}-X_2X_2^{*})^{-2}X_2X_1^{*}](i,i)}-\frac{1}{1+\E [X_1X_2^{*}(\lambda_i I_{p}-X_2X_2^{*})^{-2}X_2X_1^{*}](i,i)}\\
&:=& C_1+C_2~.
\end{eqnarray*}
Next, we simplify the values of $C_1$ and $C_2$.

\begin{eqnarray*}
C_1&=&\frac{1}{1+[X_1X_2^{*}(l_iI_{p}-X_2X_2^{*})^{-2}X_2X_1^{*}](i,i)}-\frac{1}{1+[X_1X_2^{*}(\lambda_i I_{p}-X_2X_2^{*})^{-2}X_2X_1^{*}](i,i)}\\
&=&\frac{X_1X_2^{*}\left[(\lambda_i I_{p}-X_2X_2^{*})^{-2}-(l_iI_{p}-X_2X_2^{*})^{-2}\right]X_2X_1^{*}(i,i)}{\left[1+X_1X_2^{*}(l_i I_{p}-X_2X_2^{*})^{-2}X_2X_1^{*}(i,i)\right]\cdot\left[1+X_1X_2^{*}(\lambda_i I_{p}-X_2X_2^{*})^{-2}X_2X_1^{*}(i,i)\right]}~.
\end{eqnarray*}
First, consider the part in the above numerator:
\begin{align}\label{33}
&~~~~(\lambda_i I_{p}-X_2X_2^{*})^{-2}-(l_iI_{p}-X_2X_2^{*})^{-2}\nonumber\\
&=\left[(\lambda_i I_{p}-X_2X_2^{*})^{-1}-(l_iI_{p}-X_2X_2^{*})^{-1}\right]\cdot \left[(\lambda_i I_{p}-X_2X_2^{*})^{-1}+(l_iI_{p}-X_2X_2^{*})^{-1}\right]\nonumber\\
&=(l_i-\lambda_i)(\lambda_i I_{p}-X_2X_2^{*})^{-1}(l_i I_{p}-X_2X_2^{*})^{-1}\cdot \left[(\lambda_i I_{p}-X_2X_2^{*})^{-1}+(l_iI_{p}-X_2X_2^{*})^{-1}\right]~.
\end{align}
Since $\sqrt n(l_i-\lambda_i)$ has a central limit theorem with the following expression using our notation (see \cite{BaiYao08}):
\begin{align}\label{eigenclt}
&~~~~(l_i-\lambda_i)(1+a_iym_3(\lambda_i)+o(1))\nonumber\\
&=X_1(I+X_2^{*}(\lambda_i I_{p}-X_2X_2^{*})^{-1}X_2)X_1^{*}(i,i)-\E X_1(I+X_2^{*}(\lambda_i I_{p}-X_2X_2^{*})^{-1}X_2)X_1^{*}(i,i)~,
\end{align}
which implies that \eqref{33} tends to
\begin{eqnarray*}
2(l_i-\lambda_i)(\lambda_i I_{p}-X_2X_2^{*})^{-3}+o(1/\sqrt n)~.
\end{eqnarray*}
So
\begin{eqnarray}\label{c1}
C_1&=&2(l_i-\lambda_i)\frac{X_1X_2^{*}(\lambda_i I_{p}-X_2X_2^{*})^{-3}X_2X_1^{*}(i,i)}{[1+X_1X_2^{*}(\lambda_i I_{p}-X_2X_2^{*})^{-2}X_2X_1^{*}(i,i)]^2}+o(1/\sqrt n)\nonumber\\
&=&\frac{2a_iym_5(\lambda_i)}{(1+a_i y m_3(\lambda_i))^2}\cdot(l_i-\lambda_i)+o(1/\sqrt n)~.
\end{eqnarray}
And
\begin{align}\label{c2}
C_2&=\frac{1}{1+X_1X_2^{*}(\lambda_i I_{p}-X_2X_2^{*})^{-2}X_2X_1^{*}(i,i)}-\frac{1}{1+\E X_1X_2^{*}(\lambda_i I_{p}-X_2X_2^{*})^{-2}X_2X_1^{*}(i,i)}\nonumber\\[1mm]
&=-\frac{X_1X_2^{*}(\lambda_i I_{p}-X_2X_2^{*})^{-2}X_2X_1^{*}(i,i)-\E[X_1X_2^{*}(\lambda_i I_{p}-X_2X_2^{*})^{-2}X_2X_1^{*}](i,i)}{(1+X_1X_2^{*}(\lambda_i I_{p}-X_2X_2^{*})^{-2}X_2X_1^{*}(i,i))(1+\E X_1X_2^{*}(\lambda_i I_{p}-X_2X_2^{*})^{-2}X_2X_1^{*}(i,i))}\nonumber\\[1mm]
&=-\frac{X_1X_2^{*}(\lambda_i I_{p}-X_2X_2^{*})^{-2}X_2X_1^{*}(i,i)-\E[X_1X_2^{*}(\lambda_i I_{p}-X_2X_2^{*})^{-2}X_2X_1^{*}](i,i)}{(1+a_i y m_3(\lambda_i))^2}\nonumber\\
&~~~+o(1/\sqrt n)~.
\end{align}

Let
\begin{eqnarray}
&&A(\lambda):=I_n+X_2^{*}(\lambda_i I_{p}-X_2X_2^{*})^{-1}X_2~\label{la},\\
&&B(\lambda):=X_2^{*}(\lambda_i I_{p}-X_2X_2^{*})^{-2}X_2~\label{lb},
\end{eqnarray}
combining with \eqref{eigenclt}, \eqref{c1} and \eqref{c2} leads to
\begin{align}
C&=\frac{2a_iym_5(\lambda_i)}{(1+a_i y m_3(\lambda_i))^3}\cdot X_1[A-\E A]X_1^{*}(i,i)-\frac{X_1[B-\E B]X_1^{*}(i,i)}{(1+a_i y m_3(\lambda_i))^2}+o(1/\sqrt n)\nonumber\\
&=\frac{2a_iym_5(\lambda_i)}{(1+a_i y m_3(\lambda_i))^3}\cdot \frac{\xi_{1:n}[A-\E A]\xi_{1:n}^{*}(i,i)}{n}-\frac{\xi_{1:n}[B-\E B]\xi_{1:n}^{*}(i,i)}{n(1+a_i y m_3(\lambda_i))^2}+o(1/\sqrt n)~.
\end{align}
Therefore,
\begin{align*}
&~~~~\sqrt n\cdot\left(u^2_i(i)-\frac{1}{1+a_i y m_3(\lambda_i)}\right)\\
&=\frac{2a_iym_5(\lambda_i)}{(1+a_i y m_3(\lambda_i))^3}\cdot \frac{ \xi_{1:n}[A-\E A]\xi_{1:n}^{*}(i,i)}{\sqrt n}-\frac{ \xi_{1:n}[B-\E B]\xi_{1:n}^{*}(i,i)}{\sqrt n(1+a_i y m_3(\lambda_i))^2}+o(1)~,
\end{align*}
which leads to the fact that
\begin{align*}
\begin{pmatrix}
  \sqrt n\left(u^2_i(i)-\frac{1}{1+a_i y m_3(\lambda_i)}\right) \\[2mm]
  \sqrt n (l_i-\lambda_i) \\
\end{pmatrix}&=\begin{pmatrix}
                \frac{2a_iym_5(\lambda_i)}{(1+a_i y m_3(\lambda_i))^3} & \frac{-1}{(1+a_i y m_3(\lambda_i))^2} \\[2mm]
                \frac{1}{1+a_i y m_3(\lambda_i)}& 0 \\
              \end{pmatrix}\cdot \begin{pmatrix}
  \frac {1}{\sqrt n} \xi_{1:n}[A-\E A]\xi_{1:n}^{*}(i,i) \\[2mm]
  \frac {1}{\sqrt n} \xi_{1:n}[B-\E B]\xi_{1:n}^{*}(i,i) \\
\end{pmatrix}\\
&~~+o(1)~.
\end{align*}
If we denote
\[
D:=\begin{pmatrix}
                \frac{2a_iym_5(\lambda_i)}{(1+a_i y m_3(\lambda_i))^3} & \frac{-1}{(1+a_i y m_3(\lambda_i))^2} \\[2mm]
                \frac{1}{1+a_i y m_3(\lambda_i)}& 0 \\
              \end{pmatrix}~,
\]
and combining with Lemma \ref{abjoint}, we have got that
\[
\begin{pmatrix}
  \sqrt n\left(u^2_i(i)-\frac{1}{1+a_i y m_3(\lambda_i)}\right) \\[2mm]
  \sqrt n (l_i-\lambda_i) \\
\end{pmatrix}
\]
is asymptotically Gaussian with mean $\mathbf{0}$ and covariance matrix
\begin{align*}
DBD^T=\begin{pmatrix}
       v_{11}& v_{12}\\
       v_{12} & v_{22}\\
    \end{pmatrix}~,
\end{align*}
where
\begin{align*}
&v_{11}=\frac{(2a_iym_5)^2}{(1+a_iym_3)^6}B_{11}-\frac{4a_iym_5}{(1+a_iym_3)^5}B_{12}+\frac{1}{(1+a_iym_3)^4}B_{22}\\
&~~~~=\frac{a_i^2y^2(a_i^2+y-1)^2}{(a_i-1)^4(a_i-1+y)^4}\nu_4(i)+\frac{2a_i^2y((a_i+y-1)^2+ya_i^2)}{((a_i-1)^2-y)(a_i-1+y)^4}\\
&v_{12}=\frac{2a_iym_5}{(1+a_iym_3)^4}B_{11}-\frac{1}{(1+a_iym_3)^3}B_{12}\\
&~~~~=\frac{ya_i^2(a_i^2-1+y)((a_i-1)^2-y)}{(a_i-1)^4(a_i-1+y)^2}\nu_4(i)+\frac{2a_i^3y}{(a_i-1)(a_i-1+y)^2}\\
&v_{22}=\frac{1}{(1+a_iym_3)^2}B_{11}\\
&~~~~=\frac{a_i^2((a_i-1)^2-y)^2}{(a_i-1)^4}\nu_4(i)+\frac{2a_i^2((a_i-1)^2-y)}{(a_i-1)^2}
\end{align*}

\end{proof}

\section{Appendix}
\begin{lemma}\label{value}
For $a \notin [1-\sqrt y, 1+\sqrt y]$ and $\phi(a)=a+ya/(a-1) \notin [a_y,b_y]$, we have the following relationship:
\begin{eqnarray*}
&&m_0\circ \phi(a)=\frac{1}{a-1+y}~,\\
&&m_1\circ \phi(a)=\frac{1}{a-1}~,\label{m1}\\
&&m_2\circ \phi(a)=\frac{(a-1)+y(a+1)}{(a-1)[(a-1)^2-y]}~,\\
&&m_3\circ \phi(a)=\frac{1}{(a-1)^2-y}~\label{m33},\\
&&m_4\circ \phi(a)=\frac{(a-1)^2}{((a-1)^2-y)(a-1+y)^2}~,\\
&&m_5\circ \phi(a)=\frac{(a-1)^3}{((a-1)^2-y)^3}~,\\
&&m_6\circ \phi(a)=\frac{(a-1)^4[(a-1+y)^2+a^2y]}{((a-1)^2-y)^5}~,\\
&&m_3\circ \phi(a)+m_7\circ \phi(a)=\frac{a(a-1+y)(a-1)^2}{((a-1)^2-y)^3}~.
\end{eqnarray*}
\end{lemma}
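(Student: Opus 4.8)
\emph{Strategy.} The plan is to reduce all eight formulas to a single base identity for the Stieltjes transform of the Mar\v{c}enko--Pastur law, and then obtain the rest by repeated differentiation in $\lambda$ along the curve $\lambda=\phi(a)$.

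\emph{Step 1: algebraic reductions.} All of $m_2,\dots,m_7$ are elementary combinations of $m_0,m_1$ and their first three $\lambda$-derivatives. Indeed, $m_0'(\lambda)=-\int(\lambda-x)^{-2}F_y(dx)$, $m_0''(\lambda)=\int 2(\lambda-x)^{-3}F_y(dx)$, $m_0'''(\lambda)=-\int 6(\lambda-x)^{-4}F_y(dx)$, and likewise for $m_1$; combining this with the splittings $x=\lambda-(\lambda-x)$ and $x^2=\lambda^2-2\lambda(\lambda-x)+(\lambda-x)^2$ inside the integrals and with $\int F_y(dx)=1$ gives
\begin{align*}
&m_1=\lambda m_0-1,\quad m_4=-m_0',\quad m_3=-m_1'=-m_0-\lambda m_0',\quad m_5=\tfrac12 m_1''=m_0'+\tfrac{\lambda}{2}m_0'',\\
&m_2=-m_1-\lambda m_1'=-m_1+\lambda m_3,\quad m_7=m_0+2\lambda m_0'+\tfrac{\lambda^2}{2}m_0'',\quad m_6=-m_0'-\lambda m_0''-\tfrac{\lambda^2}{6}m_0'''.
\end{align*}
In particular $m_3+m_7=\lambda m_0'+\tfrac{\lambda^2}{2}m_0''=\lambda m_5$, which is exactly the last line of the lemma once $m_5$ is known. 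So it suffices to evaluate $m_0$ and its first three $\lambda$-derivatives at $\lambda=\phi(a)$.

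\emph{Step 2: the base identity $m_0(\phi(a))=(a-1+y)^{-1}$.} Writing $s(\lambda)=\int(x-\lambda)^{-1}F_y(dx)=-m_0(\lambda)$ for the Stieltjes transform of the Mar\v{c}enko--Pastur law \cite{MP}, one has the classical quadratic $y\lambda s^2+(\lambda+y-1)s+1=0$, i.e. $y\lambda m_0^2-(\lambda+y-1)m_0+1=0$. Substituting $\lambda=\phi(a)=\tfrac{a(a-1+y)}{a-1}$, a direct computation shows that $m_0=(a-1+y)^{-1}$ is a root; it is the correct branch because as $a\to+\infty$ with $a>1+\sqrt y$ we have $\phi(a)\to+\infty$ and $m_0(\phi(a))\sim\phi(a)^{-1}\sim a^{-1}\sim(a-1+y)^{-1}$ (the case $0<a<1-\sqrt y$ follows by continuity). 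Equivalently one may use the companion relation $\underline s(\phi(a))=-1/a$ already invoked in the proof of Theorem \ref{lu}, together with $\underline s=-(1-y)\lambda^{-1}+ys$. This immediately yields $m_1(\phi(a))=\phi(a)(a-1+y)^{-1}-1=(a-1)^{-1}$, and we record $\phi'(a)=\tfrac{(a-1)^2-y}{(a-1)^2}$, which is nonzero and of constant sign for $a\notin[1-\sqrt y,1+\sqrt y]$; hence $\phi$ is a diffeomorphism onto its image there and $\phi(a)\notin[a_y,b_y]$, so all the integrals $m_i(\phi(a))$ converge.

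\emph{Step 3: propagating the derivatives, and the main obstacle.} Since $\phi$ is a diffeomorphism on the relevant set, for any smooth $f$ we have $f'(\phi(a))=\phi'(a)^{-1}\tfrac{d}{da}\big[f(\phi(a))\big]$. Applying this once to the explicit $a$-expressions for $m_0(\phi(a))$ and $m_1(\phi(a))$ gives $m_0'(\phi(a))=-m_4(\phi(a))$ and $m_1'(\phi(a))=-m_3(\phi(a))$; a second and third application give $m_0''$ and $m_0'''$ at $\phi(a)$; substituting into the reduction formulas of Step 1 and simplifying the resulting rational functions of $a$ and $y$ produces all eight identities. The only real obstacle is the bookkeeping: the formula for $m_6$ needs the third $\lambda$-derivative of $m_0$ on the curve, hence two further chain-rule applications followed by a moderately long rational-function simplification, and $m_2$, $m_5$ are shorter instances of the same. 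No conceptual difficulty is involved, and each identity can be checked independently (for instance symbolically); the two points that need care are the branch selection in Step 2 and the verification that $\phi(a)\notin[a_y,b_y]$, both guaranteed by the hypothesis $a\notin[1-\sqrt y,1+\sqrt y]$.
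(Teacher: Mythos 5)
Your proof is correct and follows essentially the same route as the paper's (sketched) argument: both reduce all the $m_i$ to the Mar\v{c}enko--Pastur Stieltjes transform and its first few derivatives and then exploit the defining functional equation of that transform --- your chain rule in $a$ along $\lambda=\phi(a)$ is exactly the paper's implicit differentiation of $\lambda=-1/\underline{m}+y/(1+\underline{m})$ reparametrized through $\underline{m}(\phi(a))=-1/a$. The only point to tighten is the branch selection on $(0,1-\sqrt y)$: since this component is disconnected from $(1+\sqrt y,\infty)$, ``by continuity'' requires one explicit check there (e.g.\ as $a\to 0^{+}$ one has $\phi(a)\to 0^{+}$ and $m_0(\phi(a))\to -\int x^{-1}F_y(dx)=-1/(1-y)$, which matches $1/(a-1+y)$ at $a=0$), and this is immediate.
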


\begin{proof}
(Sketch of the proof) Recall the definitions of these functions in \eqref{mi}, which can all be related to the combinations of the Stieltjes transform:
\[
m(\lambda)=\int \frac{1}{x-\lambda}dF(x)
\]
and it's derivatives. Besides,  $\underline{m}(\lambda)$ (definition and properties can be found in \cite{Bai04}) satisfies:
\[
\lambda=-\frac{1}{\underline{m}(\lambda)}+\frac{y}{1+\underline{m}(\lambda)}~,
\]
by taking derivatives on both sides with respect to $\lambda$ and combing with the relationship between  $\underline{m}(\lambda)$ and $m(\lambda)$:
\[
\underline{m}(\lambda)=ym(\lambda)-\frac{1}{\lambda}(1-y)~
\]
will lead to the result. Details of the calculations are omitted.

\end{proof}

\begin{lemma}\label{abjoint}
With the matrices $A$ and $B$ defined in \eqref{la} and \eqref{lb}, we have
\begin{equation*}
\begin{pmatrix}
  \frac {1}{\sqrt n} \xi_{1:n}[A-\E A]\xi_{1:n}^{*}(i,i) \\[2mm]
  \frac {1}{\sqrt n} \xi_{1:n}[B-\E B]\xi_{1:n}^{*}(i,i) \\
\end{pmatrix}\Longrightarrow \N \left(\begin{pmatrix}
                                        0 \\[2mm]
                                        0 \\
                                      \end{pmatrix},\begin{pmatrix}
                                                      B_{11} & B_{12} \\[2mm]
                                                      B_{12} & B_{22} \\
                                                    \end{pmatrix}
\right)~,
\end{equation*}
where
\begin{align*}
B_{11}=a_i^2w_1\nu_4(i)+2\tau_1 a_i^2~,~B_{22}=a_i^2w_2\nu_4(i)+2\tau_2 a_i^2~,~B_{12}=a_i^2w_3\nu_4(i)+2\tau_3 a_i^2~,
\end{align*}
and
\begin{align*}
&w_1=\frac{(a_i-1+y)^2}{(a_i-1)^2}~,~w_2=\frac{y^2}{((a_i-1)^2-y)^2}~,~w_3=\frac{y(y+a_i-1)}{(a_i-1)\cdot\left((a_i-1)^2-y\right)}\\
&\tau_1=\frac{(a_i-1+y)^2}{(a_i-1)^2-y}~,~\tau_2=\frac{y(a_i-1)^4((a_i-1+y)^2+a_i^2y)}{((a_i-1)^2-y)^5}~,~\tau_3=\frac{a_iy(a_i-1+y)(a_i-1)^2}{((a_i-1)^2-y)^3}
\end{align*}
\end{lemma}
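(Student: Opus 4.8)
The plan is to condition on $X_2$ and invoke Theorem~\ref{sesquilinear}, or rather its real version Corollary~\ref{real}. Since $\xi$ and $\eta$ are independent, the matrices $A=A(\lambda_i)$ and $B=B(\lambda_i)$ of \eqref{la}--\eqref{lb} are functions of $X_2$ alone and hence independent of the $i$-th row of $\xi_{1:n}$; fix a realisation of $\{X_2\}_n$ (which happens with probability one) along which all the spectral limits below hold. Conditionally on that $X_2$, the scalar $\tfrac1{\sqrt n}\xi_{1:n}A\xi_{1:n}^{*}(i,i)$ is exactly the raw form $\tfrac1{\sqrt n}X(1)^{*}A_nY(1)$ of Theorem~\ref{sesquilinear} with $K=1$, $A_n=A$ and $X(1)=Y(1)$ the transposed $i$-th row of $\xi_{1:n}$, so that $\rho(1)=\E[\xi(i)^2]=a_i$, and similarly for $B$. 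The lemma recentres these forms by their unconditional means rather than by $\rho(1)\tr A$ and $\rho(1)\tr B$; the two differ by $\tfrac{a_i}{\sqrt n}(\E_{X_2}\tr A-\tr A)$, i.e.\ $1/\sqrt n$ times a bounded fluctuation of a linear spectral statistic of $X_2X_2^{*}$, which is $o_P(1/\sqrt n)$ and may be discarded. Hence the random vector in the lemma has the same weak limit as $(U(1),V(1))$ from Theorem~\ref{sesquilinear}. As $A,B$ are real symmetric, $\theta_j=\tau_j$ for $j=1,2,3$, while $A_1=\E[\xi(i)^4]-a_i^2=a_i^2\nu_4(i)+2a_i^2$ and $A_2=A_3=a_i^2$; the covariance formulas of Theorem~\ref{sesquilinear} therefore collapse to $B_{11}=a_i^2w_1\nu_4(i)+2\tau_1a_i^2$, $B_{22}=a_i^2w_2\nu_4(i)+2\tau_2a_i^2$, $B_{12}=a_i^2w_3\nu_4(i)+2\tau_3a_i^2$, with $w_1=\lim_n\tfrac1n\tr[A\circ A]$, $\tau_1=\lim_n\tfrac1n\tr[A^2]$, and analogously for the other indices, so everything reduces to evaluating these six trace limits.

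For the $\tau$'s I would exploit the cyclic identity $\tr\big[(X_2^{*}MX_2)(X_2^{*}NX_2)\big]=\tr[MTNT]$ with $T:=X_2X_2^{*}$ ($p\times p$, with empirical spectral distribution converging to $F_y$). Writing $A=I+X_2^{*}(\lambda_iI-T)^{-1}X_2$ and $B=X_2^{*}(\lambda_iI-T)^{-2}X_2$ one obtains $\tfrac1n\tr A^2=1+\tfrac2n\tr[T(\lambda_iI-T)^{-1}]+\tfrac1n\tr[T^2(\lambda_iI-T)^{-2}]\to1+2ym_1(\lambda_i)+ym_2(\lambda_i)=:\tau_1$, $\tfrac1n\tr B^2\to ym_6(\lambda_i)=:\tau_2$ and $\tfrac1n\tr[AB]\to y(m_3(\lambda_i)+m_7(\lambda_i))=:\tau_3$, each limit following from $\tfrac1n\tr[g(T)]=\tfrac pn\cdot\tfrac1p\tr[g(T)]\to y\int g\,dF_y$ and the definitions in \eqref{mi}. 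For the $w$'s I need the diagonal entries $[A_n(\lambda_i)]_{uu}=\tfrac1n\eta_u^{*}(\lambda_iI-T)^{-1}\eta_u$ and $[B(\lambda_i)]_{uu}=\tfrac1n\eta_u^{*}(\lambda_iI-T)^{-2}\eta_u$: by the standard rank-one (Sherman--Morrison) resolvent perturbation together with quadratic-form concentration, exactly as in \cite{BaiYao08}, these converge almost surely, uniformly in $u$, to $ym_1(\lambda_i)$ and $ym_3(\lambda_i)$, whence $\tfrac1n\sum_uA_{uu}^2\to(1+ym_1(\lambda_i))^2=:w_1$, $\tfrac1n\sum_uB_{uu}^2\to(ym_3(\lambda_i))^2=:w_2$ and $\tfrac1n\sum_uA_{uu}B_{uu}\to(1+ym_1(\lambda_i))(ym_3(\lambda_i))=:w_3$. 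I expect this uniform diagonal concentration to be the only genuinely technical point; everything else is bookkeeping.

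Finally I would substitute $m_1(\lambda_i)=1/(a_i-1)$, $m_3(\lambda_i)=1/((a_i-1)^2-y)$, and the explicit values of $m_2(\lambda_i)$, $m_6(\lambda_i)$ and $m_3(\lambda_i)+m_7(\lambda_i)$ at $\lambda_i=\phi(a_i)$ provided by Lemma~\ref{value}; a direct algebraic simplification turns $w_1,w_2,w_3,\tau_1,\tau_2,\tau_3$ into the closed forms displayed in the statement, and thereby $B_{11},B_{12},B_{22}$ into the asserted ones. Since the resulting covariance matrix is deterministic (it does not depend on the realisation of $X_2$), the conditional central limit theorem furnished by Corollary~\ref{real} shows that, for almost every $X_2$, the conditional characteristic function of the pair converges to that of $\N(0,B)$; taking expectations and applying bounded convergence removes the conditioning and yields the claimed unconditional Gaussian limit.
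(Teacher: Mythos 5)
Your proposal is correct and follows essentially the same route as the paper: reduce to Corollary~\ref{real} with $K=1$ and $X(1)=Y(1)$ the $i$-th row of $\xi_{1:n}$ (conditioning on $X_2$, which the paper leaves implicit), compute $A_1=a_i^2\nu_4(i)+2a_i^2$, $A_2=A_3=a_i^2$, evaluate the six trace limits via the same cyclic-trace and rank-one resolvent arguments, and substitute the explicit values from Lemma~\ref{value}; your intermediate forms $w_1=(1+ym_1)^2$, $w_2=(ym_3)^2$, $w_3=(1+ym_1)ym_3$ agree with the paper's via the Mar\v{c}enko--Pastur identities $\tfrac{y(1+m_1)}{\lambda-y(1+m_1)}=ym_1$ and $\tfrac{ym_4}{(1-ym_0)^2}=ym_3$. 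The only blemish is that the centering discrepancy $\tfrac{a_i}{\sqrt n}(\E\tr A-\tr A)$ is $O_P(1/\sqrt n)$ rather than $o_P(1/\sqrt n)$, which is still $o_P(1)$ and hence negligible as you conclude.
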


\begin{proof}
Using Corollary \ref{real}, and let $X(1)^{*}=Y(1)^{*}=(\xi_{i1}, \cdots, \xi_{in})$ ($i$-th row of $\xi_{1:n}$), $l=l^{'}=1$ and $K=1$, we have
\begin{align*}
&A_1=\E \xi_i^4-(\E \xi_i^2)^2=a_i^2(3+\nu_4(i))-a_i^2~,\\
&A_2=\E \xi_i^2 \E \xi_i^2=a_i^2~,\\
&A_3=\E \xi_i^2 \E \xi_i^2=a_i^2~.
\end{align*}
We only have to calculate these values of $w_i$ and $\tau_i$.

First,
\begin{align*}
w_1&=\lim_{n \rightarrow \infty}\frac 1n \sum_{i=1}^n \left(1+X_2^{*}(\lambda I_{p}-X_2X_2^{*})^{-1}X_2(i,i)\right)^2\\
&=1+\left(\frac{y(1+m_1(\lambda))}{\lambda-y(1+m_1(\lambda))}\right)^2+2ym_1(\lambda)\\
&=\left(\frac{a_i+y-1}{a_i-1}\right)^2~,
\end{align*}
and
\begin{align*}
\theta_1&=\tau_1=\lim_{n \rightarrow \infty} \frac 1n \tr \left(I_n+X_2^{*}(\lambda I_{p}-X_2X_2^{*})^{-1}X_2\right)^2\\
&=1+2ym_1(\lambda)+ym_2(\lambda)\\
&=\frac{(a_i-1+y)^2}{(a_i-1)^2-y}~
\end{align*}
has been proven in \cite{BaiYao08}.

Next,
\begin{align*}
w_2=\lim_{n \rightarrow \infty}\frac 1n \sum_{i=1}^n [B(\lambda)(i,i)]^2=\lim_{n \rightarrow \infty}\frac 1n \sum_{i=1}^n \left[X_2^{*}(\lambda I_{p}-X_2X_2^{*})^{-2}X_2(i,i)\right]^2~.
\end{align*}
Since
\begin{align}\label{32}
X_2^{*}(\lambda I_{p}-X_2X_2^{*})^{-2}X_2(i,i)=e_i^{*}X_2^{*}(\lambda I_{p}-X_2X_2^{*})^{-2}X_2e_i~,
\end{align}
where $e_i$ is the column vector with its $i$-th coordinate being 1.
Recall that
\[
X_2=\frac1{\sqrt{n}}(\eta_1, \cdots, \eta_n)_{p \times n}:=\frac1{\sqrt{n}} \eta_{1:n}~.
\]
then \eqref{32} reduces to
\begin{align}\label{34}
\frac 1n \eta_i^{*}(\lambda I_{p}-X_2X_2^{*})^{-2}\eta_i~.
\end{align}
Denote $X_{2i}$ as the matrix that removing the $i$-th column of $X_2$:
\[
X_{2i}=\frac {1}{\sqrt n}(\eta_1, \cdots, \eta_{i-1}, \eta_{i+1}, \cdots, \eta_n)~,
\]
then
\[
X_2X_2^{*}=X_{2i}X_{2i}^{*}+\frac 1n \eta_i \eta_i^{*}~.
\]
Using the matrix identity that
\[
(\lambda I_{p}-X_2X_2^{*})^{-1}-(\lambda I_{p}-X_{2i}X_{2i}^{*})^{-1}=(\lambda I_{p}-X_2X_2^{*})^{-1}\frac 1n \eta_i \eta_i^{*} (\lambda I_{p}-X_{2i}X_{2i}^{*})^{-1}~,
\]
we have
\[
(\lambda I_{p}-X_2X_2^{*})^{-1}=\frac{1}{1-\frac 1n \eta_i^{*}(\lambda I_{p}-X_{2i}X_{2i}^{*})^{-1}\eta_i}\cdot(\lambda I_{p}-X_{2i}X_{2i}^{*})^{-1}~,
\]
which leads to
\[
(\lambda I_{p}-X_2X_2^{*})^{-2}=\frac{1}{(1-\frac 1n \eta_i^{*}\left(\lambda I_{p}-X_{2i}X_{2i}^{*})^{-1}\eta_i\right)^2}\cdot(\lambda I_{p}-X_{2i}X_{2i}^{*})^{-2}~,
\]
and \eqref{34} equals to
\[
\frac{\frac 1n \eta_i^{*}(\lambda I_{p}-X_{2i}X_{2i}^{*})^{-2}\eta_i}{(1-\frac 1n \eta_i^{*}\left(\lambda I_{p}-X_{2i}X_{2i}^{*})^{-1}\eta_i\right)^2}~,
\]
which tends to the limit:
\[
\frac{y \int \frac{1}{(\lambda-x)^2}dF(x)}{(1-y\int \frac{1}{\lambda-x}dF(x))^2}=\frac{ym_4(\lambda)}{(1-ym_0(\lambda))^2}~.
\]
Therefore,
\[
w_2=\frac{(ym_4(\lambda))^2}{(1-ym_0(\lambda))^2}=\frac{y^2}{((a_i-1)^2-y)^2}~.
\]

\begin{align*}
w_3&=\lim_{n \rightarrow \infty}\frac 1n \sum_{i=1}^n A(\lambda)(i,i)B(\lambda)(i,i)\\
&=\lim_{n \rightarrow \infty}\frac 1n \sum_{i=1}^n \left(1+X_2^{*}(\lambda I_{p}-X_2X_2^{*})^{-1}X_2(i,i)\right)\cdot X_2^{*}(\lambda I_{p}-X_2X_2^{*})^{-2}X_2(i,i)\\
&=\lim_{n \rightarrow \infty}\frac 1n \sum_{i=1}^n X_2^{*}(\lambda I_{p}-X_2X_2^{*})^{-1}X_2(i,i)\cdot X_2^{*}(\lambda I_{p}-X_2X_2^{*})^{-2}X_2(i,i)\\
&~~~~+\lim_{n \rightarrow \infty}\frac 1n \tr \left[X_2^{*}(\lambda I_{p}-X_2X_2^{*})^{-2}X_2\right]\\[3mm]
&=\frac{y(1+m_1(\lambda))}{\lambda-y(1+m_1(\lambda))}\cdot \frac{ym_4(\lambda)}{(1-ym_0(\lambda))^2}+ym_3(\lambda)\\
&=\frac{y(y+a_i-1)}{(a_i-1)((a_i-1)^2-y)}
\end{align*}

\begin{align*}
\theta_2&=\tau_2=\lim_{n \rightarrow \infty} \frac 1n \sum_{i,j=1}^n \left(X_2^{*}(\lambda I_{p}-X_2X_2^{*})^{-2}X_2(i,j)\right)^2\\
&=\lim_{n \rightarrow \infty} \frac 1n \tr \left[X_2^{*}(\lambda I_{p}-X_2X_2^{*})^{-2}X_2\right]^2\\
&=y\int \frac{x^2}{(\lambda-x)^4}dF(x)\\
&=ym_6(\lambda)\\
&=\frac{y(a_i-1)^4\left((a_i-1+y)^2+a_i^2y\right)}{((a_i-1)^2-y)^5}
\end{align*}

\begin{align*}
\theta_3&=\tau_3=\lim_{n \rightarrow \infty} \frac 1n \tr [A(\lambda)B(\lambda)]\\
&=\lim_{n \rightarrow \infty} \frac 1n \tr \left\{\left(I_n+X_2^{*}(\lambda I_{p}-X_2X_2^{*})^{-1}X_2\right)X_2^{*}(\lambda I_{p}-X_2X_2^{*})^{-2}X_2\right\}\\
&=y \int \frac{x}{(\lambda-x)^2}dF(x)+y\int \frac{x^2}{(\lambda-x)^3}dF(x)\\
&=y(m_3(\lambda)+m_7(\lambda))\\
&=\frac{a_iy(a_i-1+y)(a_i-1)^2}{((a_i-1)^2-y)^3}
\end{align*}

The proof of Lemma \ref{abjoint} is complete.
\end{proof}

\section*{Acknowledgement}
We thank the anonymous referees for helpful comments. In particular, the application in Proposition \ref{propan} has been suggested by one of the referees.

\end{document}